\documentclass[twoside]{article}
\usepackage[utf8]{inputenc}

\usepackage{graphicx}
\usepackage{amsmath,amsthm,amssymb,mathtools}
\usepackage{xcolor}
\usepackage{booktabs}
\usepackage{array}
\usepackage{longtable}
\usepackage{titlesec}
\usepackage[breaklinks=true]{hyperref}
\usepackage{makecell}

\usepackage{amsmath,amsthm,amssymb,mathtools}
\usepackage{xcolor}
\usepackage{booktabs}
\usepackage[breaklinks=true]{hyperref}
\usepackage{array}
\usepackage{subcaption}
\usepackage{longtable}

\newcommand{\R}{\mathbb{R}}
\newcommand{\trace}{\operatorname{tr}}

\newtheorem{thm}{Theorem}
\newtheorem{lemma}[thm]{Lemma}

\newtheorem{corollary}[thm]{Corollary}

\newcommand{\ns}[1]{\makebox[2em][r]{\,(#1)}}   

\newcommand{\exclude}[1]{}

\newcommand{\matip}[2]{#1 \bullet #2}

\newcommand{\citep}[1]{\cite{#1}}
\newcommand{\citet}[1]{\cite{#1}}
\usepackage{geometry}
\usepackage{fancyhdr}
\makeatletter
\g@addto@macro\normalsize{%
  \setlength{\abovedisplayskip}{8pt plus 1pt minus 1pt}
  \setlength{\belowdisplayskip}{8pt plus 1pt minus 1pt}
  \setlength{\abovedisplayshortskip}{8pt plus 1pt minus 1pt}
  \setlength{\belowdisplayshortskip}{8pt plus 1pt minus 1pt}
}
\makeatother

\title{Sparse Cuts for the Positive Semidefinite Cone}
\author{Oktay G\"unl\"uk$^a$ \and Paul J\"unger$^b$ \and {Jeff Linderoth}$^c$ \and Andrea Lodi$^b$ \and James Luedtke$^c$}
\date{$^a$Georgia Tech, $^b$Cornell Tech, $^c$U. Wisconsin-Madison \\[.3cm] \today}

\begin{document}
	
	\maketitle
	\begin{abstract}
		We consider optimization problems containing nonconvex quadratic functions for which semidefinite programming (SDP) relaxations often yield strong bounds.
		We investigate linear inequalities that outer approximate the positive semidefinite cone and are sparse in the sense that they are supported only on the variables corresponding to products of variables present in quadratic functions. We show that these sparse linear inequalities yield an LP relaxation that gives the same bound as the SDP relaxation.
		We demonstrate how to identify these inequalities via a separation procedure that involves solving a structured ``projection'' SDP.  In a computational study, we find that the sparse LP relaxations defined by these inequalities can accelerate branch-and-bound methods for globally solving nonconvex optimization problems.
	\end{abstract}

	\newcommand{\BnBx}{B\&B}
	\newcommand{\BnB}{B\&B }
	\newcommand{\PSDx}{PSD}
	\newcommand{\PSD}{PSD }
	\newcommand{\SDPx}{SDP}
	\newcommand{\SDP}{SDP }
	\newcommand{\LPx}{LP}
	\newcommand{\LP}{LP }
	
	\section{Introduction}
	\label{sec:intro}

	We propose a computational mechanism to accelerate the global solution of sparse nonconvex optimization problems that contain nonconvex quadratic functions. For clarity, we focus on a quadratically constrained quadratic program (QCQP) of the form
	\begin{subequations}
		\label{eq:nonconvex-qp}
		\begin{align}
			z^{\mathrm{QP}} := \min_{x\in\mathcal X} \quad & x^\top \bar Q^0 x + c_0^\top x + d_0 \label{eq:qp-obj}\\
			\text{s.t.} \quad & x^\top \bar Q^k x + c_k^\top x + d_k \leq 0,
			\quad k=1,\ldots,m, \label{eq:qp-cons}
		\end{align}
	\end{subequations}
	where, for each $k\in\{0,\ldots,m\}$, $c_k\in\R^n$, $d_k\in\R$, and $\bar Q^k\in\mathcal S_n$, with $\mathcal S_n$ denoting the space of $n\times n$ real symmetric matrices. The set $\mathcal X$ is meant to encode simple linear or bound constraints on $x$. The matrices $\bar Q^k$ are not assumed to be positive semidefinite (PSD), so both the objective function and the feasible region of~\eqref{eq:nonconvex-qp} may be nonconvex. Quadratic models arise in a broad range of applications, including planning and scheduling, engineering design, power systems, control, and signal processing\citep{floudas.visweswaran:95, burer.letchford:12, pezeshki.scharf.chong:10}.
	
	\paragraph{The Shor relaxation.}
	A standard mechanism for obtaining a convex relaxation of~\eqref{eq:nonconvex-qp}, due to \citet{shor:90}, begins by introducing a symmetric matrix of decision variables $X\in\mathcal S_n$ to represent the products $xx^\top$. This gives the exact lifted reformulation
	\begin{subequations}
		\label{eq:shor-reformulation}
		\begin{align}
			z^{\mathrm{QP}} = \min_{x,X} \quad &\matip{\bar Q^0}{X} + c_0^\top x + d_0 \label{eq:shor-reformulation-obj}\\
			\text{s.t.} \quad & \matip{\bar Q^k}{X} + c_k^\top x + d_k \leq 0, \quad k=1,\ldots,m, \label{eq:shor-reformulation-cons}\\
			& x \in \mathcal X, \label{eq:shor-reformulation-X}\\
			& X = xx^\top, \label{eq:shor-reformulation-Xxx}
		\end{align}
	\end{subequations}
	where $A\bullet B:=\trace(B^\top A)$ denotes the trace inner product.
	In the Shor reformulation, the objective and quadratic constraints are linear in the lifted variables $(x,X)$, and all nonconvexity associated with the products has been isolated in~\eqref{eq:shor-reformulation-Xxx}. Define
	\begin{equation}
		\label{eq:Y-definition}
		Y(x,X):=
		\begin{bmatrix}
			1 & x^\top\\
			x & X
		\end{bmatrix}\in\mathcal S_{n+1}.
	\end{equation}
	The equality $X = xx^\top$ in~\eqref{eq:shor-reformulation-Xxx} is equivalent to requiring that $Y$ is positive semi-definite (PSD) and rank 1:
	\begin{subequations}
		\label{eq:rank-one-characterization}
		\begin{align}
			Y(x,X) &\succeq 0 \quad \text{and}\\
			\label{eq:nonconvex-rank-one}
			\operatorname{rank}(Y(x,X)) &= 1.
		\end{align}
	\end{subequations}
	We use the standard notation $A \succeq 0$ to denote that the symmetric matrix $A$ is PSD.
	
	The Shor relaxation is obtained by dropping the nonconvex rank constraint~\eqref{eq:nonconvex-rank-one} from the formulation~\eqref{eq:shor-reformulation}, yielding
	\begin{subequations}
		\label{eq:shor-relaxation-xX}
		\begin{align}
			z^{\mathrm{SDP}} := \min_{x,X} \quad & \matip{\bar Q^0}{X}+c_0^\top x + d_0\\
			\text{s.t.}\quad & \matip{\bar Q^k}{X} + c_k^\top x + d_k \leq 0, \quad k=1,\ldots,m,\\
			& x \in \mathcal X,\\
			& \begin{bmatrix} 1 & x^\top\\ x & X \end{bmatrix} \succeq 0.
		\end{align}
	\end{subequations}
	For notational convenience, define
	\begin{equation}
		\label{eq:Q-definition}
		Q^k:= \begin{bmatrix}
			d_k & \tfrac12 c_k^\top\\
			\tfrac12 c_k & \bar Q^k
		\end{bmatrix}, \qquad k=0,\ldots,m,
	\end{equation}
	so that $\matip{Q^k}{Y(x,X)} = \matip{\bar Q^k}{X}+c_k^\top x + d_k$. 
	Let $\mathcal Y$ denote the set of symmetric matrices $Y$ satisfying $Y_{00}=1$ and whose first column (excluding $Y_{00}$) defines a vector $x\in\mathcal X$. The Shor SDP relaxation can then be written compactly as
	\begin{subequations}
		\label{eq:sdp}
		\begin{align}
			z^{\mathrm{SDP}} = \min_{Y \in \mathcal Y} \quad & \matip{Q^0}{Y} \label{eq:sdp-obj}\\
			\text{s.t.} \quad & \matip{Q^k}{Y} \leq 0, \quad k=1,\ldots,m, \label{eq:sdp-cons}\\
			& Y \in \mathcal S^+_{n+1}, \label{eq:sdp-psd}
		\end{align}
	\end{subequations}
	where $\mathcal S^+_{n+1}$ is the cone of $(n+1)\times(n+1)$ symmetric PSD matrices.  In the remainder of the paper, we omit the dimension from $\mathcal S$ and $\mathcal S^+$ when it is clear from context.
	
	The SDP relaxation~\eqref{eq:sdp} is known empirically to provide a strong lower bound on $z^{\mathrm{QP}}$~\citep{anstreicher:09}. If the variables $x$ are bounded, it can be strengthened with additional linear inequalities, including McCormick inequalities~\citep{mccormick:76} and inequalities arising from the Boolean Quadric Polytope~\citep{burer.letchford:09}. We treat such inequalities as additional linear constraints in~\eqref{eq:sdp-cons} or in the definition of $\mathcal Y$.
	
	Using the strong lower bound $z^{\mathrm{SDP}}$ to solve the nonconvex QCQP~\eqref{eq:nonconvex-qp} to global optimality comes with computational challenges. Modern solvers such as \textsc{Baron}, \textsc{Cplex}, \textsc{Gurobi}, and \textsc{Scip} typically solve nonconvex quadratic programs using spatial branch-and-bound (\BnBx) algorithms that solve a relaxation at each node of the \BnB tree~\citep{tawarmalani.sahinidis:05,bonami.gunluk.linderoth:18,bestuzheva.et.al:25}.  Although SDPs are polynomially solvable in theory, large-scale instances remain challenging in practice; see~\citet{majumdar.et.al:20} for a survey. Moreover, interior-point methods~\citep{nesterov.nemirovski:94} and first-order or regularization methods~\citep{malick.et.al:09,sun.et.al:19} can be difficult to warm-start effectively, limiting their utility when child-node relaxations differ only slightly from their parents.
	
	Our goal is therefore to obtain bounds as strong as $z^{\mathrm{SDP}}$ using a sparse linear relaxation that can be reoptimized efficiently within a \BnB solver. 
	\begin{equation}
		\label{eq:semi-infinite-psd}
		Y \in \mathcal S^+ \quad \Longleftrightarrow \quad \matip{C}{Y} \geq 0 \quad \forall C \in \mathcal S^+.
	\end{equation}
	For a finite set $\mathcal T^+\subset\mathcal S^+$, retaining only the associated inequalities gives the LP relaxation
	\begin{subequations}
		\label{eq:lp}
		\begin{align}
			z^{\mathrm{LP}} = \min_{Y \in \mathcal Y} \quad & \matip{Q^0}{Y}\\
			\text{s.t.}\quad & \matip{Q^k}{Y} \leq 0, \quad k=1,\ldots,m,\\
			& \matip{C}{Y} \geq 0, \quad C \in \mathcal T^+.
		\end{align}
	\end{subequations}
	If $\widehat{Y}$ is a solution to~\eqref{eq:lp}, and $\widehat{Y} \in \mathcal{S}^+$, then the SDP~\eqref{eq:sdp} is solved; otherwise $v^\top \widehat{Y} v < 0$ for an eigenvector $v$ of $\widehat{Y}$ associated with a negative eigenvalue.  The matrix $C := vv^\top \in \mathcal{S}^+$, so the linear inequality $C \bullet Y \geq 0$ can be added to the set $\mathcal{T}^+$, and the process can be repeated to solve the SDP relaxation via what is known as Kelley's cutting-plane algorithm \cite{kelley:60}.  Under mild conditions, there exists a finite set $\mathcal{T}^+$ for which the LP relaxation will provide the optimal relaxation bound $z^{\mathrm{SDP}}$ \cite{krishnan.mitchell:06,deroux.carr.ravi:25}.
	
	
	\paragraph{Sparse PSD cuts.}
	Several authors have investigated computationally tractable, sparse, linear or conic approximations of PSD constraints. \citet{sherali.fraticelli:02} derive semidefinite cuts that strengthen standard lifted linear relaxations.  \citet{qualizza.belotti.margot:12} study linear outer approximations for QCQPs coming from eigenvector-based inequalities, including sparsification procedures for these PSD cuts and cuts derived from principal submatrices.  \citet{hijazi.coffrin.vanhentenryck:16} develop polynomial SDP cuts based on small principal minors for optimal power flow.  \citet{baltean-lugojan.et.al:19} consider sparse PSD cuts and use a trained machine learning model to score the expected objective improvement from adding new cuts.  \citet{dey.et.al:22} formulate the search for an effective sparse eigenvector cut as a sparse principal-component-analysis problem.  Complementing this computational work, \citet{blekherman.et.al:22} analyze the approximation of the PSD cone obtained by requiring all principal submatrices of a fixed order to be PSD.
	
	These approaches demonstrate that sparsity can substantially improve the computational behavior of PSD-based cutting planes.  They make different tradeoffs among the support of a cut, the cost of generating the cut, and the strength of the resulting relaxation. However, none of these approaches can simultaneously ensure that every generated inequality is supported only on the product variables present in the original QCQP and that the resulting relaxation can recover the true SDP bound $z^{\mathrm{SDP}}$.  Our approach achieves both properties.
	
	\paragraph{Aggregate sparsity and PSD matrix completion.}
	To describe the problem's sparsity pattern, let
	\begin{equation}
		\label{eq:E-definition}
		\begin{aligned}
			E := \bigl\{(i,j) \in \{0,1,\ldots,n\}^2:\;&
			i=0\ \text{or } j=0\ \text{or } i=j \\
			&\text{or } Q^k_{ij} \neq 0
			\text{ for some } k \in \{0,\ldots,m\}\bigr\}.
		\end{aligned}
	\end{equation}
	The set of pairs $E$ contains all elements in the first row and column, on the diagonal, and for every non-zero matrix element from the quadratic forms appearing in the instance.  We assume throughout that any additional linear constraints in the definition of $\mathcal Y$ are also supported on $E$.  With the set $E$, we can associate its (undirected) \emph{sparsity graph} with vertex set $\{0,1,\ldots,n\}$ and a single edge $\{i,j\}$ for each off-diagonal pair $(i,j) \in E$.
	The entries $Y_{ij}$ with $(i,j) \in E$ form a partial symmetric matrix. Let
	\begin{equation}
		\label{eq:se-subspace-def}
		\mathcal S(E):=\{Y \in \mathcal S : Y_{ij} = 0 \text{ for all } (i,j) \notin E\},
	\end{equation}
	be the subspace of symmetric matrices with zero entries outside of $E$.  We refer to matrices in $\mathcal S(E)$ as $E$-partial matrices.
	Next, define $\Pi_E$ as the orthogonal projection from $\mathcal{S}$ onto $\mathcal{S}(E)$, i.e., for $Y \in \mathcal{S}$, 
	\[ [\Pi_E(Y)]_{ij} = \begin{cases} Y_{ij} & (i,j) \in E \\
		0 & \text{otherwise}.\end{cases} \]
	In addition, let
	\begin{equation}
		\label{eq:psd-completable-matrices}
		\Pi_E(\mathcal{S}^+) = \{ \Pi_E(Y) : Y \in \mathcal{S}^+ \}
	\end{equation}
	be the cone of PSD completable $E$-partial matrices. That is, $Z \in \Pi_E(\mathcal{S}^+)$ means that $Z$ admits a PSD completion.
	
	These definitions connect our formulation directly to the classical theory of positive semidefinite matrix completion. Foundational results of \citet{grone.et.al:84} characterize matrix completion in terms of properties of the sparsity graph, and \citet{agler.et.al:88} study the dual cone of PSD matrices with a prescribed sparsity pattern.
	Recall that a graph is called \emph{chordal} if every cycle of length four or more in the graph has a chord.
	The foundational result from~\citet{grone.et.al:84} is that if the sparsity graph of the matrix is chordal, then a partial matrix with specified diagonal admits a PSD completion exactly when every fully specified submatrix associated with a maximal clique is PSD. For a nonchordal sparsity graph, one may first construct its chordal extension, but the added edges correspond to fill entries that must be represented by additional variables.
	
	These matrix completion results have been leveraged to solve sparse SDPs more efficiently. Fukuda et al.~\cite{fukudaetal2001} develop a general matrix-completion framework that exploits the aggregate sparsity of SDP data in primal-dual interior-point methods, and Nakata et al.~\cite{nakata.et.al:03} give an implementation and numerical study. Burer~\cite{burer2003} develops an interior-point method directly in the space of partial positive semidefinite matrices. The reader is referred to Vandenberghe and Andersen~\cite{vandenberghe.andersen:15} for a comprehensive treatment of chordal graphs, PSD-completable matrices, and sparse PSD cones. These works reformulate and solve the SDP more efficiently, typically by introducing clique variables associated with a chordal extension. Our objective is different but complementary---after using SDP computations to identify strong inequalities, we seek a final \emph{linear} formulation that contains only the entries indexed by the original pattern $E$ and can therefore be reoptimized efficiently throughout a \BnB tree.  In particular, we do not want fill entries introduced by a chordal extension to appear as variables in the final formulation.
	
	The underlying theory that allows us to achieve our objective is conic duality.  In Section~\ref{sec:sparse-rel} we leverage the well-known fact that the dual of the PSD-completable cone $\Pi_E(\mathcal S^+)$, in the space $\mathcal S(E)$, is
	\begin{equation}
		\label{eq:completion-duality}
		\Pi_E(\mathcal S^+)^* = \mathcal S^+ \cap \mathcal S(E).
	\end{equation}
	Consequently, PSD completability of an $E$-partial matrix can be expressed using linear inequalities $\matip{C}{Y} \geq 0$ whose coefficients are defined by sparse PSD matrices $C\in\mathcal S^+\cap\mathcal S(E)$.  We use this conic duality to construct and separate linear inequalities whose support agrees exactly with the original QCQP sparsity pattern.  
	
	\paragraph{Contributions.}
	Building on this established matrix-completion framework, we make the following contributions. First, we leverage~\eqref{eq:completion-duality} to give a semi-infinite linear reformulation of~\eqref{eq:sdp} that has nonzeros only in $E$, and we show that the reformulation has the same optimal value as the full SDP relaxation.  Under a standard strong-duality assumption, we also show that the optimal value can be attained using finitely many PSD cuts that have support in $E$. Second, we extend this construction to doubly nonnegative relaxations when the original variables are nonnegative. Third, we formulate structured projection SDPs that separate violated inequalities in these two sparse cut families.
	
	The direct Kelley cutting-plane procedure associated with the proposed separation problems can converge slowly.  We therefore propose to first solve the SDP relaxation~\eqref{eq:sdp} and generate inequalities by separating points close to an optimal SDP solution.  Repeating this process yields a modest size collection of sparse inequalities that is added to a formulation subsequently solved by a standard \BnB solver.  The initial SDP and cut-generation costs are incurred once, with the aim of producing a sparse LP relaxation that can be reoptimized effectively at many nodes. This strategy is related in spirit to the QCR method and its variants \citep{billionnet.elloumi:07,billionnet.elloumi.lambert:12}, which also solve an auxiliary SDP to construct a formulation for a general-purpose solver. The resulting QCR formulation is a possibly dense convex quadratic program, whereas our construction produces an LP relaxation supported on $E$.
	
	Our computational results show that the SDP bound can be approached, and in many cases attained to numerical tolerance, using relatively few sparse inequalities. The final LP relaxations are substantially cheaper to solve than formulations containing large collections of dense PSD cuts. When used to augment a spatial \BnB solver, the sparse cuts improve performance substantially on instances derived from the BoxQP test set with the addition of nonconvex quadratic constraints and on several groups of QPLIB instances.
	
	The remainder of the paper is organized as follows. Section~\ref{sec:sparse-rel} develops the sparse SDP and doubly nonnegative reformulations and the finite-cut result. Section~\ref{sec:separation} presents the separation problems and the accelerated cut-generation procedure. Section~\ref{sec:computation} reports the computational experiments, including comparisons with dense PSD cuts and results within spatial branch-and-bound. Section~\ref{sec:conclusions} concludes the paper.

	\section{Sparse relaxations}
	\label{sec:sparse-rel}
	
	\subsection{A sparse reformulation of the SDP relaxation}\label{sec:sparsesdp}
	
	
	Recall the definition of the cone of PSD completable E-partial matrices from~\eqref{eq:psd-completable-matrices}.
	In the SDP relaxation~\eqref{eq:sdp}, all matrices $Q^k$, as well as the constraints defining $\mathcal Y$, are supported on $E$.
	Consequently, the objective and constraints depend on $Y$ only through $\Pi_E(Y)$.  
	Thus, as observed by \citet{fukudaetal2001} and \citet{burer2003}, retaining $Y$ as a full symmetric matrix, the SDP relaxation~\eqref{eq:sdp} is equivalently written as
	\begin{subequations}\label{eq:sdp-projected}
		\begin{align}
			z^{\mathrm{SDP}}
			= \min_{Y\in\mathcal Y}\quad
			& Q^0\bullet Y,\\
			\text{s.t.}\quad
			& Q^k\bullet Y\leq 0,
			\quad k=1,\ldots,m,\\
			& Y \in\Pi_E(\mathcal S^+).
		\end{align}
	\end{subequations}
	Here $Y$ technically remains a full matrix in $\mathcal S$, while the last
	constraint requires its entries indexed by $E$ to admit a positive
	semidefinite completion. The entries $Y_{ij}$ with $(i,j)\notin E$
	are unrestricted and do not affect the objective or any constraint.
	
	
	We stated the following well-known result (see, e.g., Chapter~1 in \cite{magron2023sparse}) that characterizes the dual cone of $\Pi_E(\mathcal{S}^+)$ in the introduction. We include a proof here for completeness.
	\begin{lemma}\label{thm:proj-esdp}
		The cones $\Pi_E(\mathcal{S}^+)$ and $\mathcal{S}^+ \cap \mathcal{S}(E)$ are dual cones in $\mathcal{S}(E)$.
	\end{lemma}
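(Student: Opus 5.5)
We need to show that, inside the space $\mathcal S(E)$ with the trace inner product, $\Pi_E(\mathcal S^+)^* = \mathcal S^+\cap\mathcal S(E)$. The single identity the argument rests on is that for $C\in\mathcal S(E)$ and any $Y\in\mathcal S$,
\[
\matip{C}{\Pi_E(Y)} = \matip{C}{Y},
\]
which holds because $C$ vanishes off $E$; equivalently, $\Pi_E$ is self-adjoint and fixes $\mathcal S(E)$. The plan is to prove the two inclusions separately, and then to address closedness so the duality can be read in both directions.

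\emph{Inclusion $\mathcal S^+\cap\mathcal S(E)\subseteq \Pi_E(\mathcal S^+)^*$.} Let $C\in\mathcal S^+\cap\mathcal S(E)$ and $Z=\Pi_E(Y)$ with $Y\succeq 0$. Then $\matip{C}{Z}=\matip{C}{Y}\ge 0$, since the trace inner product of two PSD matrices is nonnegative; this is the easy direction of~\eqref{eq:semi-infinite-psd}.

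\emph{Inclusion $\Pi_E(\mathcal S^+)^*\subseteq \mathcal S^+\cap\mathcal S(E)$.} Let $C\in\mathcal S(E)$ satisfy $\matip{C}{Z}\ge0$ for all $Z\in\Pi_E(\mathcal S^+)$. For any $v$, take $Y=vv^\top\succeq 0$. Then
\[
v^\top C v=\matip{C}{vv^\top}=\matip{C}{\Pi_E(vv^\top)}\ge 0,
\]
so $C\succeq 0$. Together the two inclusions give $\Pi_E(\mathcal S^+)^*=\mathcal S^+\cap\mathcal S(E)$.

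\emph{The reverse relation.} To call the two cones mutually dual we also want $(\mathcal S^+\cap\mathcal S(E))^*=\Pi_E(\mathcal S^+)$ in $\mathcal S(E)$. By the bipolar theorem this holds exactly when $\Pi_E(\mathcal S^+)$ is closed, and closedness is the one real obstacle, since linear images of closed cones need not be closed. I would argue it directly. Because $E$ contains the diagonal, $\Pi_E$ preserves diagonal entries. If $Z_k=\Pi_E(Y_k)\to Z$ with $Y_k\succeq0$, then the diagonals of the $Y_k$ are bounded. The PSD inequality $|Y_{ij}|\le\sqrt{Y_{ii}Y_{jj}}$ then bounds every entry of $Y_k$. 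Hence a subsequence $Y_k$ converges to some $Y\succeq 0$, and by continuity $\Pi_E(Y)=Z$, so $Z\in\Pi_E(\mathcal S^+)$.

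With both cones closed and convex, the bipolar theorem yields the second relation, and the lemma follows.
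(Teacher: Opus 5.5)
Your two inclusions are essentially the paper's proof. The paper gets the second inclusion from $C\bullet Y = C\bullet \Pi_E(Y)\ge 0$ for all $Y\in\mathcal S^+$ together with self-duality of $\mathcal S^+$; you test only against $vv^\top$, which is the same idea made elementary.

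Your closedness argument is correct and goes beyond the paper. You use that $\Pi_E$ preserves the diagonal, bound the entries by $|Y_{ij}|\le\sqrt{Y_{ii}Y_{jj}}$, and extract a convergent subsequence. The paper proves only $\Pi_E(\mathcal S^+)^*=\mathcal S^+\cap\mathcal S(E)$. It then silently uses the reverse relation $(\mathcal S^+\cap\mathcal S(E))^*=\Pi_E(\mathcal S^+)$ in the proof of Corollary~\ref{thm-foo}, so your extra step is exactly what justifies that use.
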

	\begin{proof}
		Let $\Pi_E(\mathcal{S}^+)^*$ denote the dual cone of $\Pi_E(\mathcal{S}^+)$ in $\mathcal{S}(E)$.
		
		First, suppose $Z \in \Pi_E(\mathcal{S}^+)$. Then,  there exists $Y \in \mathcal{S}^+$ with $Y_{ij} = Z_{ij}$ for all $(i,j) \in E$.  It holds that
		\[ \matip{C}{Z} = \matip{C}{Y} \geq 0 \quad \forall C \in \mathcal{S}^+ \cap \mathcal{S}(E),  \]
		where the first equality follows from $C \in \mathcal{S}(E)$ and the inequality follows because $Y \in \mathcal{S}^+$. This shows that $\Pi_E(\mathcal{S}^+)^* \supseteq \mathcal{S}^+ \cap \mathcal{S}(E)$.
		
		Next, let $C \in (\Pi_E(\mathcal{S}^+))^*$. Then $C \in \mathcal{S}(E)$ because $(\Pi_E(\mathcal{S}^+))^*$ is the dual cone of $\Pi_E(\mathcal{S})^+$ in $\mathcal{S}(E)$. Next, for any $Y \in \mathcal{S}^+$ it holds that
		\[ \matip{C}{Y} = \matip{C}{\Pi_E(Y)} \geq 0, \]
		where the equality follows because $C \in \mathcal{S}(E)$ and the inequality follows because $C \in (\Pi_E(\mathcal{S}^+))^*$. Thus $C \in (\mathcal{S}^+)^*=\mathcal{S}^+$, showing $(\Pi_E(\mathcal{S}^+))^* \subseteq \mathcal{S}^+ \cap \mathcal{S}(E)$.
	\end{proof}
	
	
	Lemma~\eqref{thm:proj-esdp} leads to the following ``sparse cut'' reformulation of \eqref{eq:sdp}:
	
	\begin{subequations}\label{SDP-E1}\begin{align}
			z^{\text{SDP-E}}=\min_{Y \in \mathcal{Y}} \ & \matip{Q^0}{Y}  \\
			\text{s.t. } & \matip{Q^k}{Y}  \leq 0, \quad k=1,\ldots, m \label{SDP-E1-b}\\
			& \matip{C}{Y} \geq 0, \quad \forall C \in \mathcal{S}^+ \cap \mathcal{S}(E) \label{eq:epsd} \\
			& Y_{ij} = 0, \quad \forall (i,j) \notin E. \label{redundant}
	\end{align}\end{subequations}
	We refer to the inequalities in \eqref{eq:epsd} as $E$-sparse PSD cuts, or simply sparse PSD cuts.
	The constraints \eqref{redundant} are technically redundant -- in an implementation the variables $Y_{ij}$ for $(i,j) \notin E$ would simply be omitted. We include them for consistency with Lemma \ref{thm:proj-esdp}.

	\begin{corollary} \label{thm-foo}
		It holds that $z^{\text{SDP}}=z^{\text{SDP-E}}$.
	\end{corollary}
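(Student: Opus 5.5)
We show the two inequalities $z^{\text{SDP-E}}\le z^{\text{SDP}}$ and $z^{\text{SDP}}\le z^{\text{SDP-E}}$ by mapping feasible points of each problem to feasible points of the other with the same objective value. The tools are the projected reformulation \eqref{eq:sdp-projected} and the duality of Lemma~\ref{thm:proj-esdp}. Throughout we use that every $Q^k$, and every linear constraint defining $\mathcal Y$, is supported on $E$. Hence $\matip{Q^k}{Y}=\matip{Q^k}{\Pi_E(Y)}$ for all $k$, and $Y\in\mathcal Y$ if and only if $\Pi_E(Y)\in\mathcal Y$. The latter holds because $E$ contains the whole first row and column and the diagonal, so $Y_{00}$ and the vector $x$ are unchanged by $\Pi_E$.

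For $z^{\text{SDP-E}}\le z^{\text{SDP}}$, take $Y$ feasible for \eqref{eq:sdp} and set $Z:=\Pi_E(Y)$. Then $Z\in\mathcal Y$, and $Z$ satisfies \eqref{SDP-E1-b} with the same values $\matip{Q^k}{Z}=\matip{Q^k}{Y}$. It also satisfies \eqref{redundant} by construction. For any $C\in\mathcal S^+\cap\mathcal S(E)$ we have $\matip{C}{Z}=\matip{C}{Y}\ge 0$, because $C$ is supported on $E$ and $Y\succeq 0$; this is exactly the first half of the proof of Lemma~\ref{thm:proj-esdp}. So $Z$ is feasible for \eqref{SDP-E1} with the same objective value. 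Taking the infimum over $Y$ gives the inequality.

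For $z^{\text{SDP}}\le z^{\text{SDP-E}}$, take $Z$ feasible for \eqref{SDP-E1}. Then $Z\in\mathcal S(E)$ and $\matip{C}{Z}\ge0$ for all $C\in\mathcal S^+\cap\mathcal S(E)$. Thus $Z$ lies in the dual cone of $\mathcal S^+\cap\mathcal S(E)$ taken in $\mathcal S(E)$, which is $\bigl(\Pi_E(\mathcal S^+)\bigr)^{**}$ by Lemma~\ref{thm:proj-esdp}. Here the bipolar theorem is needed, and this is the one real obstacle: $K^{**}=K$ only when $K$ is a closed convex cone. Convexity of $\Pi_E(\mathcal S^+)$ is immediate, since it is a linear image of a convex cone.

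Closedness is the step that needs an argument, because linear images of closed cones need not be closed. Here it holds because the diagonal is in $E$ and the diagonal controls all entries of a PSD matrix. Suppose $\Pi_E(Y^t)\to Z$ with $Y^t\succeq 0$. The diagonals of the $Y^t$ converge, so they are bounded. Since $|Y^t_{ij}|\le\sqrt{Y^t_{ii}Y^t_{jj}}$, the whole sequence $Y^t$ is bounded. A subsequence therefore converges to some $Y\succeq0$, and then $\Pi_E(Y)=Z$.

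With closedness established, $Z\in\Pi_E(\mathcal S^+)$, so there is $Y\succeq0$ with $\Pi_E(Y)=Z$. By the support observation in the first paragraph, $Y\in\mathcal Y$, $Y$ satisfies \eqref{eq:sdp-cons}, and $\matip{Q^0}{Y}=\matip{Q^0}{Z}$. Hence $Y$ is feasible for \eqref{eq:sdp} with the same objective value, which gives the reverse inequality. Combining both directions, the two problems have the same feasible objective values, so $z^{\text{SDP}}=z^{\text{SDP-E}}$. This includes the infeasible case, where both values are $+\infty$.
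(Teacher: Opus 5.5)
Your proof is correct and follows the same route as the paper: project an SDP-feasible $Y$ onto $\mathcal S(E)$ for one inequality, and use Lemma~\ref{thm:proj-esdp} to obtain a PSD completion of an SDP-E-feasible $Z$ for the other. You are in fact more careful than the paper. The paper cites the lemma without noting that passing from $\Pi_E(\mathcal S^+)^*=\mathcal S^+\cap\mathcal S(E)$ to $Z\in\Pi_E(\mathcal S^+)$ requires the bipolar theorem, and hence closedness of $\Pi_E(\mathcal S^+)$, which your diagonal-boundedness argument correctly supplies.
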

	\begin{proof}
		First, it is immediate from \eqref{eq:semi-infinite-psd} that  $z^{\text{SDP}}\geq z^{\text{SDP-E}}$.
		
		Let $Z$ be a feasible solution to \eqref{SDP-E1}. By Lemma \ref{thm:proj-esdp}, $Z \in \Pi_E(\mathcal{S}^+)$ and hence there exists $Y \in \mathcal{S}^+$ such that  $Y_{ij} = Z_{ij}$ for all $(i,j) \in E$. Note that $Y$ satisfies \eqref{eq:sdp-cons} because $Q^k$ has nonzero entries only on $E$ and therefore
		$\matip{Q^k}{Z} = \matip{Q^k}{Y}$ for all $k=0,1,\ldots,m$.
		Thus,  $Y$ is feasible to \eqref{eq:sdp} and has the same objective value in \eqref{eq:sdp} as $Z$ has in \eqref{SDP-E1}. This establishes that  $z^{\text{SDP}}\leq z^{\text{SDP-E}}$. 
	\end{proof}

	\subsection{Replacing PSD constraints with finitely many linear inequalities via duality}
	\label{sec:replace_sdp_with_cuts_duality}
	
	Assume that $z^{\mathrm{SDP}}$ is finite and that the primal SDP~\eqref{eq:sdp} satisfies Slater's condition relative to the affine constraints defining $\mathcal Y$.
	Under these assumptions, conic strong duality holds and the dual optimal value is attained.  In this case, it is well-known that \citep{ramana1997exact,vandenberghe1996semidefinite,deroux.carr.ravi:25} strong duality holds and the SDP bound $z^{\text{SDP}}$ can be attained after replacing the SDP constraint \eqref{eq:semi-infinite-psd} with a small number of (up to $n+1$) linear inequalities of the form $\matip{v_t v_t^\top}{Y} \geq 0$,
	where the vectors $v_t$ are obtained from the rank-1 factorization of an optimal dual solution to the SDP relaxation~\eqref{eq:sdp}. 
	More precisely, let $S^*\succeq0$ denote the PSD slack matrix associated with an optimal dual solution. If $r=\operatorname{rank}(S^*)$, then $S^*$ admits a decomposition
	\[
	S^*=\sum_{t=1}^r v_tv_t^\top,
	\qquad r\leq n+1.
	\]
	It follows from the optimal dual certificate that the SDP optimal value can be reproduced by replacing the constraint $Y\succeq0$ with the $r$ linear inequalities
	\[
	\matip{v_tv_t^\top}{Y}=v_t^\top Yv_t\geq0, \qquad t=1,\ldots,r.
	\]
	Adding precisely these $r$ linear inequalities to the primal problem yields a \LP that achieves the SDP bound $z^{\text{SDP}}$, namely
	\begin{subequations}
		\label{eq:sdp2}
		\begin{align}
			z^{\text{SDP}}= \min_{Y \in \mathcal{Y}} \ & \matip{Q^0}{Y}  \\
			\text{s.t. } &\matip{Q^k}{Y}  \leq 0, \quad k=1,\ldots, m  \label{eq:sdp2-cons} \\
			& \matip{ (v_tv_t^\top) }{Y} \geq 0. \qquad t=1,\ldots, r  \label{eq:sdp2-dual-eigen}
		\end{align}
	\end{subequations}
	We next derive an upper bound on the number of inequalities that are sufficient to obtain the optimal value for the reformulation \eqref{SDP-E1}. Projecting out the $Y_{ij}$ variables for $(i,j) \not\in E$ from the formulation \eqref{eq:sdp2} yields the same optimal value.
	Recall that the constraints defining $\mathcal{Y}$ only operate on $Y_{ij}$ variables for $(i,j) \in E$. Thus, by applying Fourier-Motzkin elimination, there exists a finite set of vectors $V^1, \ldots V^\tau \in \mathcal{S}(E)$ such that 
	\begin{subequations}\label{eq:sdp2_E}\begin{align}
			z^{\text{SDP-E}}=\min_{Y \in \mathcal{Y}} \ & \matip{Q^0}{Y} \\
			\text{s.t. } & \matip{Q^k}{Y}  \leq 0,  \quad k=1,\ldots, m \\
			& \matip{V^t}{Y}  \ge 0
			,  \quad t=1,\ldots,\tau  \label{eq:sdp2-dual-eigen_E} \\
			& Y_{ij} = 0. \quad \forall (i,j) \notin E
	\end{align}\end{subequations}
		For notational simplicity, we write this as a formulation over variables $Y \in \mathcal{S}$, but after eliminating the variables fixed to $0$ this should be interpreted as an LP with $|E|$ variables.
		Thus, while  formulation \eqref{eq:sdp2_E} can have an exponential number of constraints of the form \eqref{eq:sdp2-dual-eigen_E}, at most $|E|$ many constraints from \eqref{eq:sdp2-dual-eigen_E} are sufficient to solve this LP. 
		
		\begin{lemma}
			\label{lem:evecsup}
			For $t=1,\ldots,\tau$, $V^t \in \mathcal{S}^+\cap \mathcal{S}(E)$. Thus, if Slater's condition holds for formulation \eqref{eq:sdp}, the bound $z^{\text{SDP-E}}$ in formulation \eqref{eq:sdp2} can be attained with at most $|E|$ $E$-sparse PSD cuts.
		\end{lemma}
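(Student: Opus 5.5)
The plan is to track what Fourier--Motzkin elimination does to the inequalities \eqref{eq:sdp2-dual-eigen}, show the resulting sparse inequalities are PSD, and then count the constraints an LP needs.

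\emph{Structure of the derived inequalities.} The variables to eliminate are $Y_{ij}$ for $(i,j)\notin E$. These appear only in the eigenvector inequalities \eqref{eq:sdp2-dual-eigen}, because the constraints \eqref{eq:sdp2-cons} and those defining $\mathcal Y$ are supported on $E$. At each elimination step, Fourier--Motzkin takes nonnegative combinations of pairs of current inequalities so as to cancel one variable. The inequalities without that variable are kept unchanged. By induction, every inequality produced either involves none of the eliminated variables or is a nonnegative combination of the inequalities $\matip{v_tv_t^\top}{Y}\ge 0$.

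There is one subtlety: in principle, an elimination step could also combine these with constraints from \eqref{eq:sdp2-cons} or $\mathcal Y$. It does not, since those constraints do not contain the variable being eliminated, and they simply carry through unchanged. Hence the new inequalities \eqref{eq:sdp2-dual-eigen_E} have the form $\matip{V^t}{Y}\ge 0$ with
\[
V^t=\sum_{s=1}^r \lambda_{ts}\, v_sv_s^\top, \qquad \lambda_{ts}\ge 0 .
\]
After all eliminations, $V^t$ has zero coefficients outside $E$.

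\emph{Conclusion of the first claim.} Each $V^t$ is a nonnegative combination of PSD rank-one matrices, so $V^t\succeq 0$. It also lies in $\mathcal S(E)$, so $V^t\in\mathcal S^+\cap\mathcal S(E)$. In other words, each derived inequality is an $E$-sparse PSD cut of the form \eqref{eq:epsd}.

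\emph{Counting argument.} By the discussion before the lemma, \eqref{eq:sdp2_E} has optimal value $z^{\text{SDP}}$, which equals $z^{\text{SDP-E}}$ by Corollary~\ref{thm-foo}. It is an LP in $|E|$ variables, and its optimal value is finite. An optimal basic dual solution, equivalently Carathéodory applied to the dual multipliers, is supported on at most $|E|$ constraints. Dropping all other cuts \eqref{eq:sdp2-dual-eigen_E} while keeping the original constraints preserves the optimal value. Indeed, the restricted LP is a relaxation, and the same dual certificate still bounds it from below. This leaves at most $|E|$ $E$-sparse PSD cuts.

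The main obstacle is making the Fourier--Motzkin bookkeeping rigorous, in particular that the coefficients outside $E$ of each surviving combination vanish. This holds by construction: after eliminating a variable, its coefficient in every remaining inequality is zero, and this persists through later eliminations.
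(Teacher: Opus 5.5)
Your proposal is correct and follows the paper's argument: because the $Q^k$ and the constraints defining $\mathcal Y$ are supported on $E$, Fourier--Motzkin elimination only ever combines the homogeneous cuts $\matip{v_sv_s^\top}{Y}\ge 0$. Each $V^t$ is therefore a conic combination of the $v_sv_s^\top$, so $V^t\in\mathcal S^+\cap\mathcal S(E)$, and your basic-dual-solution argument for the bound of $|E|$ cuts spells out the LP fact the paper states just before the lemma.
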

		\begin{proof}
			Since the elements of the matrices $Q^k_{ij} = 0$ for $(i,j) \notin E$ for all $k=1,\ldots,m$, the inequalities \eqref{eq:sdp2-dual-eigen_E} obtained via Fourier-Motzkin elimination are a conic combination of the matrices  $\{(v_1v_1^T),\ldots,(v_rv_r^T)\}\subset\mathcal{S}^+$.
			This implies that $V^t \in \mathcal{S}^+$ for $t=1,\ldots,\tau$, completing the proof.  
		\end{proof}
		
		We have not established whether the upper bound in Lemma \ref{lem:evecsup} is tight - e.g., it is an open question whether $n+1$ linear inequalities may be sufficient as is the case for the dense cuts \eqref{eq:sdp2-dual-eigen}.
		
		
		
		
		\subsection{Sparse doubly nonnegative relaxation}
		
		When the set $\mathcal{X}$ defines sign restrictions $x \geq 0$, the PSD constraint \eqref{eq:sdp-psd} can be replaced with the requirement that $Y$ is doubly nonnegative (DNN), 
		$Y \in \mathcal{D} := \mathcal{S}^+ \cap \mathbb{R}_{\geq 0}^{(n+1)\times (n+1)}$. This leads to the DNN relaxation
		\begin{subequations}
			\label{eq:dnn}
			\begin{align}
				z^{\text{DNN}}= \min_{Y \in \mathcal{Y}} \ & \matip{Q^0}{Y}  \label{eq:dnn-obj}\\
				\text{s.t. } & \matip{Q^k}{Y} \leq 0, \quad k=1,\ldots, m \label{eq:dnn-cons} \\
				& Y \in \mathcal{D}.  \label{eq:dnn-psd}
			\end{align}
		\end{subequations}
		
		We now extend the analysis of Section \ref{sec:sparsesdp} to derive a sparse reformulation of \eqref{eq:dnn}. The dual cone of the doubly nonnegative cone $\mathcal{D}$ is the Minkowski sum 
		\[ \mathcal{D}^* = \mathcal{S}^+ + \mathbb{R}_{\geq 0}^{(n+1)\times (n+1)}, \]
		so that $Y \in \mathcal{D}$ can be equivalently written as $\matip{C}{Y} \geq 0~\text{for all } C \in \mathcal{D}^*.$
		As in the SDP case, the constraints \eqref{eq:dnn-psd} can be replaced with $Y \in \Pi_E(\mathcal{D})$ without changing problem \eqref{eq:dnn}. The following lemma is a direct analog of Lemma \ref{thm:proj-esdp} and is proved similarly.
		
		\begin{lemma}
			\label{thm:proj-dnn}
			The cones $\Pi_E(\mathcal{D})$ and $\mathcal{D}^* \cap \mathcal{S}(E)$ are dual cones in $\mathcal{S}(E)$. 
		\end{lemma}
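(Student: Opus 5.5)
We mirror the proof of Lemma~\ref{thm:proj-esdp}, proving the two inclusions between $\Pi_E(\mathcal{D})^*$ (the dual cone taken inside $\mathcal{S}(E)$) and $\mathcal{D}^*\cap\mathcal{S}(E)$. We use only two facts: $\mathcal{D}$ is a closed convex cone with $(\mathcal{D}^*)^*=\mathcal{D}$, and $\matip{C}{Y}=\matip{C}{\Pi_E(Y)}$ whenever $C\in\mathcal{S}(E)$. The second holds because $\Pi_E$ is the orthogonal projection onto $\mathcal{S}(E)$.

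\emph{Step 1: $\mathcal{D}^*\cap\mathcal{S}(E)\subseteq\Pi_E(\mathcal{D})^*$.} Take $C\in\mathcal{D}^*\cap\mathcal{S}(E)$ and $Z\in\Pi_E(\mathcal{D})$. Write $Z=\Pi_E(Y)$ with $Y\in\mathcal{D}$. Then $\matip{C}{Z}=\matip{C}{Y}\geq 0$, since $C\in\mathcal{S}(E)$ and $C\in\mathcal{D}^*$. Because $C$ lies in the ambient space $\mathcal{S}(E)$, this gives $C\in\Pi_E(\mathcal{D})^*$.

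\emph{Step 2: $\Pi_E(\mathcal{D})^*\subseteq\mathcal{D}^*\cap\mathcal{S}(E)$.} Take $C\in\Pi_E(\mathcal{D})^*$. By definition of the dual cone within $\mathcal{S}(E)$, we have $C\in\mathcal{S}(E)$. For any $Y\in\mathcal{D}$, $\matip{C}{Y}=\matip{C}{\Pi_E(Y)}\geq 0$, because $\Pi_E(Y)\in\Pi_E(\mathcal{D})$. Hence $C$ lies in the dual of $\mathcal{D}$ in $\mathcal{S}$, which is $\mathcal{D}^*$ by definition. Therefore $C\in\mathcal{D}^*\cap\mathcal{S}(E)$.

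\emph{Main subtlety.} In Lemma~\ref{thm:proj-esdp} the dual-cone description is immediate. Here, Step~2 needs no explicit form of $\mathcal{D}^*$ at all; membership is certified directly by $\matip{C}{Y}\ge0$ for all $Y\in\mathcal{D}$. The identification $\mathcal{D}^*=\mathcal{S}^++\mathbb{R}^{(n+1)\times(n+1)}_{\geq0}$ is therefore only needed to interpret the result. That identification requires the Minkowski sum to be closed, which holds because $\mathcal{S}^+\cap(-\mathbb{R}_{\ge0})=\{0\}$ (a PSD matrix with nonpositive entries has nonpositive trace, hence is zero); the paper takes this as given.

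A related subtlety concerns the symmetric statement $\Pi_E(\mathcal{D})=(\mathcal{D}^*\cap\mathcal{S}(E))^*$, which requires $\Pi_E(\mathcal{D})$ to be closed. Closedness holds because $\Pi_E$ restricted to $\mathcal{D}$ has trivial recession kernel. Every entry of $Y$ is controlled by the diagonal, and the diagonal lies in $E$. So if $\Pi_E(Y)=0$ with $Y\succeq0$, then $\operatorname{diag}(Y)=0$ and hence $Y=0$. With closedness in hand, bipolarity gives the reverse duality.
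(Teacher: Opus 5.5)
Your two-inclusion argument is correct and matches the paper's intended proof: the paper gives no separate proof and says only that the lemma is proved like Lemma~\ref{thm:proj-esdp}, i.e.\ by running both inclusions with $\mathcal{S}^+$ replaced by $\mathcal{D}$. Your closedness remarks for $\mathcal{S}^+ + \mathbb{R}^{(n+1)\times(n+1)}_{\geq 0}$ and for $\Pi_E(\mathcal{D})$ are also correct, and they supply the bipolar direction $(\mathcal{D}^*\cap\mathcal{S}(E))^*=\Pi_E(\mathcal{D})$, which the paper uses without proof when it replaces $Y\in\Pi_E(\mathcal{D})$ by the cuts \eqref{eq:DNN-E-psd}.
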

		
		Thus, we obtain the following reformulation of \eqref{eq:dnn} based on sparse cuts:
		\begin{subequations}
			\label{eq:DNN-E}
			\begin{align}
				z^{\text{DNN-E}}=\min_{Y \in \mathcal{Y}} \ & \matip{Q^0}{Y} \\
				\text{s.t. } & \matip{Q^k}{Y}  \leq 0,  \quad k=1,\ldots, m \label{sdpe2-lin}\\
				& \matip{C}{Y} \geq 0 \quad \forall C \in \mathcal{D}^* \cap \mathcal{S}(E) \label{eq:DNN-E-psd} \\
				& Y_{ij} \geq 0 \quad \forall (i,j) \in E, \label{eq:sign} \\
				& Y_{ij} = 0 \quad \forall (i,j) \notin E.
		\end{align}\end{subequations}
		By Lemma \ref{thm:proj-dnn}, the sign constraints \eqref{eq:sign} are implied by \eqref{eq:DNN-E-psd}.  We include them explicitly since in our computational framework a relaxation of this problem is sequentially improved by adding inequalities from \eqref{eq:DNN-E-psd} as cuts and our approach for separating such cuts assumes the relaxation solution satisfies \eqref{eq:sign}.
		
		In this section, we have established that 
		$$z^{\text{QP}} ~\ge~z^{\text{DNN}}~=~z^{\text{DNN-E}}~\ge~z^{\text{SDP-E}}~=~ z^{\text{SDP}}.\label{eq:rev_ineq}$$

		
		\section{Solving the SDP-E and DNN-E relaxations}
	\label{sec:separation}
	We next describe the separation problem for finding violated sparse PSD cuts and discuss how to exploit nonnegativity constraints on the variables. We also describe our computational approach to accelerate the cutting plane procedure.
	
	
	\subsection{The separation problem}
	
	Assume we have solved a relaxation of \eqref{SDP-E1} where the constraints \eqref{eq:epsd} have not been enforced. Let $\widehat{Y}_{\mathrm{LP}} \in \mathcal S(E)$ be the current optimal solution. 
	Our goal is to identify a linear inequality among the omitted constraints \eqref{eq:epsd} that is violated by $\widehat{Y}_{\mathrm{LP}}$. To this end, we define the following SDP separation problem:
	\begin{equation}
		\label{eq:sdpsep}
		\min_{{C\in \mathcal S}} \Bigl\{ \matip{C}{\widehat{Y}_{\mathrm{LP}}}  : C \succeq 0, \ C_{ij} = 0 \ \forall(i,j) \notin E, \ \| \mathrm{diag}(C)\|_1 \leq 1 \Bigr\},
	\end{equation}
	where $\mathrm{diag}(C)$ denotes the vector obtained by the diagonal elements of $C$.
	%
	The constraint $\| \mathrm{diag}(C)\|_1 \leq 1$ provides a normalization among the cone of symmetric PSD matrices by requiring the nuclear norm to be at most one. This bounds the feasible region, but still allows efficiently finding a violated inequality defined by $C \in \mathcal{S}^+ \cap \mathcal{S}(E)$ whenever one exists. In fact, this separation problem finds the most violated inequality relative to the given normalization.
	
	\subsection{Exploiting nonnegativity}
	In case the QCQP \eqref{eq:qp-cons} includes the sign restriction $x\geq 0$, we want to solve the tighter 
	relaxation \eqref{eq:DNN-E} based on the inequalities \eqref{eq:DNN-E-psd}. 
	Thus, given a relaxation solution $\widehat{Y}_{\mathrm{LP}} \in \mathcal{S}(E)$, with $\widehat{Y}_{\mathrm{LP}} \geq 0$, we seek a formulation for the following problem of identifying a maximally violated (normalized) inequality:
	\begin{equation}
		\label{eq:dnnsep-init}
		\min_{C \in \mathcal S} \Bigl\{ \matip{C}{\widehat{Y}_{\mathrm{LP}}}  : C \in \mathcal{D}^*, \ C_{ij} = 0 \ \forall(i,j) \notin E, \ \| \mathrm{diag}(C) \|_1  \leq 1 \Bigr\}.
	\end{equation}
	
	\begin{lemma}
		\label{lem:dnnform}
		Problem \eqref{eq:dnnsep-init} is equivalent to problem 
		
		\begin{equation}
			\label{eq:dnnsep}
			\min_{{C\in \mathcal S}} \Bigl\{ \matip{C}{\widehat{Y}_{\mathrm{LP}}} : C \succeq 0, \ C_{ij} \leq 0 \ \forall(i,j) \notin E, \ \| \mathrm{diag}(C)\|_1 \leq 1 \Bigr\} .
		\end{equation}
	\end{lemma}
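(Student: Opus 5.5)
The plan is to prove that the two problems have the same optimal value by exhibiting objective-preserving or objective-improving maps between their feasible sets in both directions. Throughout I use two facts: $\widehat{Y}_{\mathrm{LP}}\in\mathcal S(E)$, and $\widehat{Y}_{\mathrm{LP}}\geq 0$ entrywise. I also use that the diagonal pairs $(i,i)$ always lie in $E$ by \eqref{eq:E-definition}, so "$(i,j)\notin E$" only ever refers to off-diagonal entries.

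\emph{From \eqref{eq:dnnsep} to \eqref{eq:dnnsep-init}.} Let $C'$ be feasible for \eqref{eq:dnnsep}. Define $N$ by $N_{ij}=-C'_{ij}$ for $(i,j)\notin E$ and $N_{ij}=0$ otherwise. Then $N\geq 0$ entrywise and $N$ is symmetric. Set $C:=C'+N$. The three constraints of \eqref{eq:dnnsep-init} hold as follows:
\begin{itemize}
\item $C\in\mathcal S^+ + \mathbb R^{(n+1)\times(n+1)}_{\geq 0}=\mathcal D^*$, since $C'\succeq 0$ and $N\geq 0$.
\item $C_{ij}=0$ for $(i,j)\notin E$, by construction of $N$.
\item $\mathrm{diag}(C)=\mathrm{diag}(C')$, because $N$ vanishes on the diagonal, so the normalization is unchanged.
\end{itemize}
Since $N$ is supported outside $E$ and $\widehat{Y}_{\mathrm{LP}}\in\mathcal S(E)$, we get $\matip{N}{\widehat{Y}_{\mathrm{LP}}}=0$. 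Hence $\matip{C}{\widehat{Y}_{\mathrm{LP}}}=\matip{C'}{\widehat{Y}_{\mathrm{LP}}}$, and the optimal value of \eqref{eq:dnnsep-init} is at most that of \eqref{eq:dnnsep}.

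\emph{From \eqref{eq:dnnsep-init} to \eqref{eq:dnnsep}.} Let $C$ be feasible for \eqref{eq:dnnsep-init}, and write $C=S+N$ with $S\succeq 0$ and $N\geq 0$ entrywise. I take $C':=S$ and check the constraints of \eqref{eq:dnnsep}:
\begin{itemize}
\item $S\succeq 0$ by choice of the decomposition.
\item For $(i,j)\notin E$ we have $0=C_{ij}=S_{ij}+N_{ij}$, so $S_{ij}=-N_{ij}\leq 0$.
\item $0\leq S_{ii}\leq S_{ii}+N_{ii}=C_{ii}$ for every $i$, so $\|\mathrm{diag}(S)\|_1\leq\|\mathrm{diag}(C)\|_1\leq 1$.
\end{itemize}
For the objective, $\matip{S}{\widehat{Y}_{\mathrm{LP}}}=\matip{C}{\widehat{Y}_{\mathrm{LP}}}-\matip{N}{\widehat{Y}_{\mathrm{LP}}}\leq \matip{C}{\widehat{Y}_{\mathrm{LP}}}$, because $N\geq 0$ and $\widehat{Y}_{\mathrm{LP}}\geq 0$ give $\matip{N}{\widehat{Y}_{\mathrm{LP}}}\geq 0$. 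So the optimal value of \eqref{eq:dnnsep} is at most that of \eqref{eq:dnnsep-init}.

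Combining the two directions, the optimal values coincide. Moreover, the first map sends any optimal $C'$ of \eqref{eq:dnnsep} to an optimal $C$ of \eqref{eq:dnnsep-init}, which is the direction needed to produce a valid cut $\matip{C}{Y}\geq 0$ from \eqref{eq:DNN-E-psd}. The only delicate point is the second direction: the decomposition $C=S+N$ is not unique, and simply keeping $C$ would not satisfy $C\succeq 0$. Discarding $N$ entirely resolves this, and it is exactly where the hypothesis $\widehat{Y}_{\mathrm{LP}}\geq 0$ is used to ensure the objective does not increase.
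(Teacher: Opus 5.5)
Your proof is correct and follows the same idea as the paper's. Both arguments write the cut matrix as a PSD part plus an entrywise nonnegative part, discard the nonnegative part (which only lowers the diagonal and, since $\widehat{Y}_{\mathrm{LP}}\geq 0$, cannot increase the objective), read off $C_{ij}=-D_{ij}\leq 0$ for $(i,j)\notin E$, and get the converse by padding the off-$E$ entries. Your explicit two-direction comparison on arbitrary feasible points is slightly tighter than the paper's version, which assumes an optimal solution with $D$ vanishing on $E$ exists and leaves the converse implicit in the step ``eliminate the $A$ and $D$ variables.''
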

	\begin{proof}
		By definition, the following semidefinite optimization problem is a restatement of 
		\eqref{eq:dnnsep-init}, where we use $A$ in place of $C$ as the matrix defining the cut:
		\begin{align*}
			\min_{{A\in \mathcal S}} \ & \matip{A}{\widehat{Y}_{\mathrm{LP}}}  \\
			\text{s.t. } & A = C +D \\
			& D \geq 0 \\
			& C \succeq 0\\
			& A_{ij} = 0 \quad \text{for all } (i,j) \notin E\\
			& \| \mathrm{diag}(A) \|_1 \leq 1 .
		\end{align*}
		Since $\widehat{Y}_{\mathrm{LP}} \geq 0$,  there always exists an optimal solution with $D_{ij} = 0$ and $C_{ij} = A_{ij}$  for all $(i,j) \in E$.
		This also implies that $\mathrm{diag}(A)=\mathrm{diag}(C)$. Also note that $A_{ij} = 0$ implies that $C_{ij} = -D_{ij}\le 0$ for all $(i,j) \notin E$. 
		Recalling that $(j,j) \in E$ for $j=0,\ldots,n$, we can then eliminate the $A$ and $D$ variables to  simplify the separation problem to yield \eqref{eq:dnnsep}.
	\end{proof}
	Observe that the only difference between \eqref{eq:sdpsep} and \eqref{eq:dnnsep} is changing the constraints $C_{ij}=0$ to $C_{ij} \leq 0$ for $(i,j) \notin E$.
	

	
	\subsection{Accelerating the cutting plane procedure}
	
		
		The separation problems \eqref{eq:sdpsep} and \eqref{eq:dnnsep} can be used within a standard cutting plane procedure for iteratively improving a linear relaxation of problem \eqref{eq:nonconvex-qp}. Specifically, a linear relaxation of the form \eqref{SDP-E1} is solved but with the constraints \eqref{eq:epsd} omitted. Given the solution $\widehat{Y}_{\text{LP}} \in\mathcal S(E)$, either separation problem \eqref{eq:sdpsep} or \eqref{eq:dnnsep} is solved. If the objective is negative, the associated optimal matrix $C$ defines a linear inequality that is added to the relaxation and the process is continued until no more violated cuts are found or a stopping condition is met.
		
		To accelerate the closing of the SDP gap, we do not separate $\widehat Y_{\mathrm{LP}}$ directly. Let $Y_{\mathrm{SDP}}^\star \in \mathcal S^+$ be an optimal full-space solution of the SDP relaxation~\eqref{eq:sdp}. We separate the point
		\[ \widehat Y_\alpha := \alpha \widehat{Y}_{\mathrm{LP}} + (1-\alpha) \Pi_E(Y_{\mathrm{SDP}}^\star) \]
		for some $\alpha \in (0,1)$.

		
		Note that if the current master-LP bound has not yet reached the optimal SDP value, then 
		\[ Q^0 \bullet \widehat Y_\alpha = \alpha Q^0\bullet\widehat Y_{\mathrm{LP}} + (1-\alpha)z^{\mathrm{SDP}} < z^{\mathrm{SDP}}, \]
		so $\widehat Y_\alpha$ cannot satisfy the PSD-completion constraint. Hence, there exists $C\in\mathcal S^+\cap\mathcal S(E)$ such that
		\[ C \bullet \widehat Y_\alpha < 0. \]
		Moreover,
		\[ C \bullet \Pi_E(Y^*_{\mathrm{SDP}}) = C \bullet Y_{\mathrm{SDP}}^\star \geq 0, \]
		and therefore
		\[ C \bullet \widehat Y_{\mathrm{LP}} < 0. \]
		Thus, the inequality $C\bullet Y\geq0$ also separates the current LP solution.

		
		The hope is that by separating points close to an optimal SDP solution, the resulting inequalities better approximate the geometry of the PSD cone $\mathcal{S}^+$ near the optimum, thus accelerating convergence of the cutting plane procedure.  This approach has been used in the context of mixed integer programming by \citet{fischetti2001polyhedral}, but we believe its use in the context of cutting-plane methods for solving SDP is novel. The drawback of this procedure is that it requires first solving \eqref{eq:sdp} to obtain $Y_{\mathrm{SDP}}^\star$. However, this only needs to be done once and, if successful, may lead to a sparse linear relaxation that provides a bound nearly as strong as the SDP relaxation, which may be beneficial when used within a branch-and-bound procedure for solving \eqref{eq:nonconvex-qp}.
		
		
		
		
		\section{Computational experiments}
		\label{sec:computation}
		
		In this section, we report on experiments using the separation problems~\eqref{eq:sdpsep} and~\eqref{eq:dnnsep} in a cutting-plane procedure to obtain dual bounds  on instances of QCQP by (approximately) solving the relaxations~\eqref{SDP-E1} or~\eqref{eq:DNN-E}.  We also report on the computational behavior of using the sparse linear inequalities found by separation in a commercial solver to solve QCQP instances to optimality.
		
		Our test suite consists of two classes of QCQP instances: $(i)$ 126 instances of box-constrained-QP (BoxQCQP) generated as in~\cite{burer:10} and then augmented with nonconvex quadratic constraints,
		and $(ii)$ 110 instances from QPLIB \cite{furini2019qplib}. The BoxQCQP instances are synthetically generated to control sparsity, while instances from QPLIB may be extremely sparse or fully dense. The methodology for generating and selecting the BoxQCQP and QPLIB instances we use in our experiments is described in~\ref{A:instances}.  Different experiments are designed to test different algorithm features, so we do not use all instances in all experiments.
		Our computational environment for the experiments is described in~\ref{A:environment}.
		Detailed computational results can be found in  \ref{app:solvedBoxQCQP}--\ref{app:relaxboundDetails}.

		\subsection{Dual bound quality}
		\label{sec:bounds}
		
		Our first experiment is aimed at comparing the performance of the sparse PSD cuts found by our procedure against cuts obtained directly from the eigenvectors corresponding to negative eigenvalues in the full space of variables.  Since our final aim is to solve instances of QCQP to optimality, we also wish to understand the impact of the cutting planes in conjunction with other linear inequalities used by commercial solvers in relaxations for QCQP, such as the McCormick inequalities. If the original variables are bounded, say $Y_{0i} := x_i \in [\ell_i,u_i]$, then we may add \emph{McCormick inequalities} $Y_{ij} \in \mathcal{M}_{ij}$ to strengthen the bound given by~\eqref{SDP-E1} or~\eqref{eq:DNN-E}, namely
		\begin{multline}
			\mathcal{M}_{ij}
			:=
			\bigl\{(x_i,x_j,Y_{ij}) :
			Y_{ij} \geq \ell_j x_i + \ell_i x_j-\ell_i\ell_j,\quad
			Y_{ij} \geq u_j x_i + u_i x_j-u_i u_j,\\
			Y_{ij} \leq \ell_j x_i + u_i x_j-u_i\ell_j,\quad
			Y_{ij} \leq u_j x_i + \ell_i x_j-\ell_i u_j
			\bigr\}.
		\end{multline}
		Note that the linear inequalities can be written for $(i,j) \in E$ without destroying the problem's natural sparsity pattern. Also note that as we have $Y_{ij}=Y_{ji}$, it suffices to add McCormick inequalities only for $i>j$.

		Table \ref{tab:bounds} compares the following cutting plane approaches for obtaining dual bounds: 
		\begin{enumerate}
			\setlength{\itemsep}{-0.3ex}
			\item Dense PSD cuts, with $\mathcal{M}_{ij}$ added $\forall (i,j) \in [0,\ldots,n]^2$ (``Dense McC.+Cuts''), 
			\item Dense PSD cuts, with $\mathcal{M}_{ij}$ added $\forall (i,j) \in E$ (``Dense Cuts''), 
			\item Sparse PSD cuts, with $\mathcal{M}_{ij}$ inequalities added $\forall (i,j) \in E$ (``Sparse cuts''),
			\item Sparse PSD cuts generated via accelerated procedure, with $\mathcal{M}_{ij}$ inequalities added $\forall (i,j) \in E$ (``SDP+Sparse Cuts'').
		\end{enumerate}
		Separation of dense PSD cuts is done by adding the inequalities $v_t v_t^\top \bullet Y \geq 0$ for all eigenvectors $v_t$ with a negative eigenvalue in the spectral decomposition of the point $\widehat{Y}$ to be separated.
		Separation of sparse PSD cuts is done by solving the DNN-E separation problem \eqref{eq:dnnsep} if $x \geq 0$ and the SDP-E separation problem \eqref{eq:sdpsep} otherwise.

		For each method and instance class, Table~\ref{tab:bounds} reports the average of the following statistics: the number of separation rounds (\textit{iter}), the number of cuts added (\textit{cuts}), the average ``SDP-gap'' closed $GC(z^{\text{LP}})$, and the solution time of the final LP ($t_{\text{lastlp}}$).  (For ``Sparse Cuts'', the \textit{iter} column is omitted since exactly one cut is added per iteration).
		The SDP-gap is a relative measure of the bound obtained by the LP cutting plane method compared to the best achievable, namely
		\[ GC(z^{\text{LP}}) = (z^{\text{LP}} - z^{\text{McC}})/(z^{\text{SDP}} - z^{\text{McC}}), \]
		where $z^{\text{LP}}$ is the LP value at termination, $z^{\text{McC}}$ is the value of the LP with no PSD cuts added, and $z^{\text{SDP}}$ is the value obtained by solving the true SDP relaxation.
		All LPs are solved with \textsc{Gurobi} using the Barrier method.
		The cutting plane procedure is run until $GC(z^{\text{LP}}) > 0.99$ or a time limit of one hour is reached. 
		
		
		
		\begin{table}[ht]
			\centering
			\resizebox{\textwidth}{!}{
				\begin{tabular}{lc|rrrr|rrrr|rrr|rrrr}
					\toprule
					\multicolumn{2}{c|}{Instances} & \multicolumn{4}{c|}{Dense McC.+Cuts} & \multicolumn{4}{c|}{Dense Cuts} & \multicolumn{3}{c|}{Sparse Cuts} & \multicolumn{4}{c}{SDP+Sparse Cuts} \\
					Group & $\#$ & $iter$ & $cuts$ & $GC$ & $t_{lastlp}$ & $iter$ & $cuts$ & $GC$ & $t_{lastlp}$ & $cuts$ & $GC$ & $t_{lastlp}$ & $t_{SDP}$ & $cuts$ & $GC$ & $t_{lastlp}$ \\
					\midrule
					\multicolumn{15}{l}{\textbf{BoxQCQP}}\\
					\addlinespace[2pt]
					$n \in [20,90]$ & 58 & 95 & 2634 & 0.87 & 48.3 & 110 & 2965 & 0.50 & 71.0 & 480 & 0.97 & 0.7 & 11.2 & 13 & \textbf{0.98} & \textbf{0.1} \\
					$n \in [100, 150]$ & 36 & 45 & 2486 & 0.70 & 180.6 & 40 & 2291 & 0.42 & 239.9 & 197 & 0.79 & 0.8 & 143.1 & 14 & \textbf{0.99} & \textbf{0.1} \\
					$n \in [175, 200]$ & 24 & 18 & 1592 & 0.61 & 592.1 & 18 & 1648 & 0.38 & 545.6 & 15 & 0.53 & 0.3 & 961.5 & 8 & \textbf{0.97} & \textbf{0.2} \\
					\midrule
					\multicolumn{15}{l}{\textbf{QPLIB}}\\
					\addlinespace[2pt]
					$n \in [10, 100)$ & 31 & 112 & 2502 & 0.94 & 49.4 & 122 & 2738 & 0.93 & 47.9 & 990 & 0.88 & 9.6 & 10.5 & 478 & \textbf{0.95} & \textbf{7.1} \\
					$n \in [100, 150]$& 22 & 24 & 1548 & 0.83 & 192.4 & 26 & 1653 & 0.82 & 184.4 & 341 & 0.80 & 16.6 & 367.6 & 313 & \textbf{0.87} & \textbf{14.1} \\
					$n \in (150, 200]$ & 24 & 16 & 1414 & \textbf{0.65} & 657.9 & 17 & 1506 & 0.60 & 548.0 & 209 & 0.59 & 20.6 & 1885.1 & 125 & 0.62 & \textbf{10.7} \\
					\bottomrule
			\end{tabular}}
			\caption{Summary of Computational Performance for Cutting Plane Methods.}
		\label{tab:bounds}
	\end{table}

	A comparison of the first three blocks of Table~\ref{tab:bounds} demonstrates that sparse PSD cuts can be highly effective at improving the dual bound, particularly for BoxQCQP instances.  As the problem size grows, the dense and sparse cutting plane methods are able to do fewer iterations within the time limit, but for different reasons.  The dense cutting plane procedure spends the majority of its computing time solving the \LP relaxations at each iteration, as illustrated by the corresponding $t_{\text{lastlp}}$ value.  The majority of the computing time in the sparse cutting plane procedure is used solving the separation SDPs~\eqref{eq:sdpsep} or~\eqref{eq:dnnsep}.
	This computational behavior is also observed in Figure~\ref{fig:comparisonApproaches}, which depicts the SDP gap closed and the LP relaxation solution time as a function of time for the different methods on the BoxQCQP instance \texttt{125-025-1\_10qc}.
	We see that both ``sparse cuts'' and ``dense cuts + dense McCormick'' close the SDP gap at the same rate, but the LP relaxations for the sparse cuts solve \emph{much} faster.  (Note that the LP time is shown on a logarithmic scale).
	One goal of our cutting-plane procedure is to produce a PSD-enhanced LP relaxation that can be employed effectively in \BnBx. The sparse approach produces strong relaxations that solve \emph{orders of magnitude faster} than their dense counterparts. 
	

	\begin{figure}[hbt]
		\centering
		\includegraphics[width=1\textwidth]{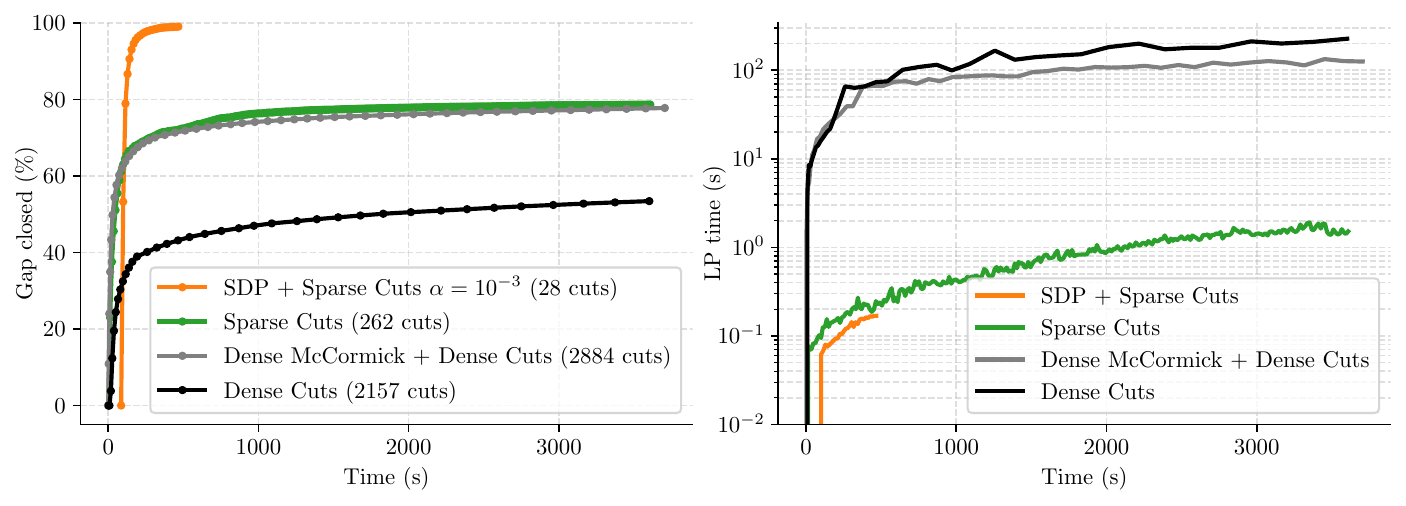}
		\vspace*{-5mm}
		\caption{Bound progression for \textsc{BoxQCQP} instance \texttt{125-025-1\_10qc}. Each dot represents an iteration. Note that the LP time is shown on a logarithmic scale.}
		\label{fig:comparisonApproaches}
	\end{figure}

	The final block of Table~\ref{tab:bounds} (``SDP+Sparse Cuts'') presents the results of our accelerated cut-generation procedure, which separates a point close to $\Pi_E(Y_{\mathrm{SDP}}^\star)$. The column $t_{\mathrm{SDP}}$ indicates the average time required to solve the initial SDP relaxation~\eqref{eq:sdp} and obtain $Y_{\mathrm{SDP}}^\star$.
	The results show that the SDP+Sparse Cuts variant closes nearly the entire gap for BoxQCQP instances and performs very effectively, often the best overall, on the QPLIB set.  The \LP relaxations remain very tractable, again an order of magnitude faster than their dense counterparts, making their use in \BnB practical. Figure \ref{fig:comparisonApproaches} also demonstrates  that the method quickly closes 99\% of the SDP gap.  Note that the orange curve is offset to the right to account for the time spent solving the SDP relaxation to obtain $Y_{\mathrm{SDP}}^\star$.

	
	\subsection{Sparse cutting planes with branch and bound} 
	\label{sec:Gurobi}

	In this section we compare the performance of \textsc{Gurobi} in its standalone configuration with \textsc{Gurobi} applied to instances augmented by the sparse cuts found by the ``SDP+Sparse Cuts'' procedure defined in Section~\ref{sec:bounds}. We omit instances where numerical errors were encountered (11 QPLIB instances), leaving 126 BoxQCQP and 99 QPLIB instances.
	For computing average solution times, runs that reach the 10-hour time limit are assigned a solution time of 36,000 seconds.
	
	For the BoxQCQP instances, the results are aggregated by size and by whether at least one of the two methods solved the instance to optimality within the time limit of ten hours.  For the QPLIB instances, the results are aggregated both by solvability of the instances and by two instance characteristics: density, either fully dense or not fully dense (sparse), and constraint type, either quadratic constraints are present (QC) or the constraints are all linear (LC).
	
	In Table~\ref{tab:Gurobi_aggregated}, 
	we report the average of the following  statistics for both methods: the gap closed at the root node ($GC_{\text{ro}}$), the number of explored nodes ($\textit{nodes}$), the final gap closed ($GC$), and the total solution time in seconds ($t$), as well as, in brackets, the number of instances solved to optimality within the time limit ($\#s$), where TO denotes that all instances timed-out.
	The gaps are computed by comparing the reported bound value $z$ to the bound of the natural McCormick relaxation $z^{\text{McC}}$ and normalizing by the largest possible difference: $(z - z^{\text{McC}})/(z^{\text{QP}} - z^{\text{McC}})$, where $z^{\text{QP}}$ is the optimal solution value, or best known solution value for unsolved instances.
	
	\begin{table}[h!]
		\centering
		\resizebox{\textwidth}{!}{%
			\begin{tabular}{lr|rrrr|rrrr}
				\toprule
				\multicolumn{2}{c|}{Instances} & \multicolumn{4}{c|}{\textsc{Gurobi}} & \multicolumn{4}{c}{SDP + Sparse Cuts + \textsc{Gurobi}} \\
				Group & $\#$ & $GC_{ro}$ & $nodes$ & $GC$ & $t$ (\#s) & $GC_{ro}$ & $nodes$ & $GC$ & $t$ (\#s) \\
				\midrule
				\multicolumn{10}{l}{\textbf{Solved BoxQCQP} (solved by at least one algorithm)}\\
				\addlinespace[2pt]
				$n\in[20, 90]$   & 66 & 0.72 & 6.7e3 & 0.99 & 1229\ns{65}  & 0.80 & 7.1e3 & \textbf{1.00} & \textbf{806}\ns{66} \\
				$n\in[100, 150]$ & 22 & 0.82 & 6.8e4 & 0.99 & 10822\ns{18} & 0.95 & 9.0e3 & \textbf{1.00} & \textbf{2126}\ns{22} \\
				$n\in[175, 200]$ &  1 & 0.77 & 3.3e4 & 0.81 & TO\ns{0} & 0.97 & 3.0e4 & \textbf{1.00} & \textbf{23533}\ns{1} \\
				\midrule
				\multicolumn{10}{l}{\textbf{Unsolved BoxQCQP} (not solved by any of the two algorithms)}\\
				\addlinespace[2pt]
				$n\in[100, 150]$ & 14 & 0.92 & 1.1e4 & 0.95 & TO & 0.96 & 1.6e4 & \textbf{0.98} & TO \\
				$n\in[175, 200]$ & 23 & 0.82 & 3.9e4 & 0.85 & TO & 0.95 & 3.5e4 & \textbf{0.96} & TO \\
				\midrule
				\multicolumn{10}{l}{\textbf{Solved QPLIB} ($n\leq 200$; solved by at least one algorithm)}\\
				\addlinespace[2pt]
				Fully Dense, QC & 12 & 0.97 & 1.6e3 & \textbf{1.00} & 4930\ns{12} & 0.98 & 1.2e3 & \textbf{1.00} & \textbf{4004}\ns{12} \\
				Sparse, QC      & 20 & 0.34 & 1.2e5 & 0.98 & 4314\ns{19} & 0.59 & 1.5e4 & \textbf{1.00} & \textbf{1763}\ns{20} \\
				Fully Dense, LC &  1 & 0.95 & 3.0e3 & \textbf{1.00} & 298\ns{1} & 0.98 & 4.5e3 & \textbf{1.00} & \textbf{260}\ns{1} \\
				Sparse, LC      & 55 & 0.72 & 3.4e5 & \textbf{1.00} & \textbf{602}\ns{55} & 0.80 & 1.7e5 & 0.98 & 13892\ns{39} \\
				\midrule
				\multicolumn{10}{l}{\textbf{Unsolved QPLIB} ($n\leq 200$; not solved by any of the two algorithms)}\\
				\addlinespace[2pt]
				Fully Dense, QC & 1 & 0.92 & 6.1e3 & \textbf{0.98} & TO & 0.93 & 6.1e3 & \textbf{0.98} & TO \\
				Sparse, QC      & 7 & 0.25 & 1.6e7 & \textbf{0.67} & TO & 0.25 & 1.7e7 & \textbf{0.67} & TO \\
				Sparse, LC      & 3 & 0.63 & 2.9e7 & \textbf{0.86} & TO & 0.63 & 1.4e7 & \textbf{0.86} & TO \\
				\bottomrule
		\end{tabular}}
		\caption{Comparison of \textsc{Gurobi} performance with and without SDP and sparse cuts. $\#s$: number of instances solved to optimality within the time limit. TO indicates that all runs in the group reached the 10-hour time limit.}
		\label{tab:Gurobi_aggregated}
	\end{table}
	
	The results in Table \ref{tab:Gurobi_aggregated} show clearly that \textsc{Gurobi} can benefit significantly from the addition of sparse PSD cuts.  In the case of BoxQCQP, \textsc{Gurobi} with cuts added solves instances faster, solves more instances, and closes more gap on unsolved instances.  For QPLIB, the picture is more complicated because the instances are more diverse. In general, adding sparse PSD cuts to the QPLIB instances either improves \textsc{Gurobi} performance or is performance neutral.  For the category in which performance degrades---sparse QPLIB instances with linear constraints---a closer inspection of the detailed results in Table~\ref{tableQPLIB_mod} of~\ref{app:solvedQPLIB} indicates that the tighter root bound of the formulation with cuts is not sufficient to recover the time spent solving the original SDP relaxation and the separation problems. In these instances, \textsc{Gurobi} alone can strengthen the bound efficiently through branching, and adding additional inequalities reduces the number of explored nodes within the time limit. 
	
	In Table~\ref{tab:Gurobi_aggregated_details}, we  report averages of the following detailed statistics for \textsc{Gurobi} with sparse cuts: the time required to solve the initial SDP ($t_{\text{SDP}}$), the gap closed by the SDP relaxation ($GC_{\text{SDP}}$), the time spent generating sparse cuts ($t_{\text{cuts}}$), the number of cuts (\textit{cuts}), and the gap closed by these cuts ($GC_{\text{cuts}}$). Although already reported in Table \ref{tab:Gurobi_aggregated}, to provide context for these results we again include in Table~\ref{tab:Gurobi_aggregated_details} the gap closed at the root node ($GC_{\text{ro}}$), the number the final gap closed ($GC$), and the total solution time in seconds ($t$).

	\begin{table}[h!]
		\centering
		\resizebox{\textwidth}{!}{%
			\begin{tabular}{lr|rcrrcccr}
				\toprule
				\multicolumn{2}{c|}{Instances} & \multicolumn{8}{c}{SDP + Sparse Cuts + \textsc{Gurobi}} \\
				Group & $\#$ & $t_{SDP}$ & $GC_{SDP}$ & $t_{cuts}$ & $cuts$ & $GC_{cuts}$ & $GC_{ro}$  & $GC$ & $t$ (\#s) \\
				\midrule
				\multicolumn{10}{l}{\textbf{Solved BoxQCQP} (solved by at least one algorithm)}\\
				\addlinespace[2pt]
				$n\in[20, 90]$   & 66 & 12 & 0.81 & 21 & 7 & 0.77 & 0.80 &  \textbf{1.00} & \textbf{806}\ns{66} \\
				$n\in[100, 150]$ & 22 & 146 & 0.95 & 183 & 4 & 0.92 & 0.95 &  \textbf{1.00} & \textbf{2126}\ns{22} \\
				$n\in[175, 200]$ & 1 & 1724 & 0.98 & 1934 & 2 & 0.95 & 0.97 &  \textbf{1.00} & \textbf{23533}\ns{1} \\
				\midrule
				\multicolumn{10}{l}{\textbf{Unsolved BoxQCQP} (not solved by any of the two algorithms)}\\
				\addlinespace[2pt]
				$n\in[100, 150]$ & 14 & 209 & 0.96 & 264 & 5 & 0.92 & 0.96 &\textbf{0.98} & TO \\
				$n\in[175, 200]$ & 23 & 1232 & 0.96 & 2127 & 5 & 0.92 & 0.95 &  \textbf{0.96} & TO \\
				\midrule
				\multicolumn{10}{l}{\textbf{Solved QPLIB} ($n\leq 200$; solved by at least one algorithm)}\\
				\addlinespace[2pt]
				Fully Dense, QC & 12 & 8 & 0.94 & 45 & 23 & 0.57 & 0.98 & \textbf{1.00} & \textbf{4004}\ns{12} \\
				Sparse, QC      & 20 & 312 & 0.46 & 68 & 10 & 0.39 & 0.59 & \textbf{1.00} & \textbf{1763}\ns{20} \\
				Fully Dense, LC & 1 & 8 & 0.42 & 25 & 25 & 0.14 & 0.98 &  \textbf{1.00} & \textbf{260}\ns{1} \\
				Sparse, LC      & 55 & 1134 & 0.85 & 748 & 57 & 0.47 & 0.80 &  0.98 & 13892\ns{39} \\
				\midrule
				\multicolumn{10}{l}{\textbf{Unsolved QPLIB} ($n\leq 200$; not solved by any of the two algorithms)}\\
				\addlinespace[2pt]
				Fully Dense, QC & 1 & 12 & 0.87 & 133 & 30 & 0.30 & 0.93 &  \textbf{0.98} & TO \\
				Sparse, QC      & 7 & 221 & 0.12 & 7 & 4 & 0.10 & 0.25 &  \textbf{0.67} & TO \\
				Sparse, LC      & 3 & 1344 & 0.66 & 145 & 24 & 0.56 & 0.63 &  \textbf{0.86} & TO \\
				\bottomrule
		\end{tabular}}
		\caption{Detailed results for SDP + Sparse Cuts + \textsc{Gurobi}.}
		\label{tab:Gurobi_aggregated_details}
	\end{table}
	
	
	We find that on the BoxQCQP instances the proposed method for generating sparse cuts closes the majority of SDP relaxation gap using a small number of cuts (less than 10 on average).  On the QPLIB instances the sparse cuts close significant gap but much less than the SDP relaxation and more cuts are added compared to the BoxQCQP instances. The gap closed after Gurobi processes the root is often much higher, suggesting our sparse cuts work effectively in conjunction with the other cuts used by Gurobi on these test instances. On all the instances, the time spent solving the SDP relaxation and generating the cuts is modest relative to the total computation time.

	\subsection{Comparing with dense SDP cuts and the objective cut} 
	\label{sec:SDP_Obj_Cut}
	
	Finally, to assess whether our approach is necessary, or whether a simpler mechanisms for transferring the SDP bound to the solver would suffice, we compare against two baselines that each sacrifice one property of our cuts.
	First, \emph{Dense Dual SDP Cuts} adds the rank-one inequalities
	$v_t v_t^{\top} \bullet Y \geq 0$ for all eigenvectors $v_t$ corresponding to
	positive eigenvalues of the dual optimal solution of the SDP relaxation. In
	contrast to the dense cuts of Section~\ref{sec:bounds}, these cuts are
	generated once from the dual optimum rather than by iterative separation. As
	discussed in Section~\ref{sec:replace_sdp_with_cuts_duality}, these cuts provably
	recover $z^{\text{SDP}}$ at the root relaxation, at the cost of being dense,
	i.e., not supported on the sparsity pattern $E$.
	Second, \emph{SDP Objective Bound Cut} instead adds the single inequality forcing the
	linearized objective to be at least $z^{\text{SDP}}$. This inequality is
	sparse, i.e., supported on $E$, but transfers only the bound itself and no
	further structural information about the PSD cone.
	

	Table~\ref{tab:sdpbound-baselines} reports the final gap closed and solution
	times; instance groups, the solved/unsolved partition, and all conventions are
	as in Table~\ref{tab:Gurobi_aggregated}. Note that the partition is
	defined with respect to \textsc{Gurobi} and SDP + Sparse Cuts only; the
	additional baselines occasionally solve instances in the unsolved groups.
	Both alternatives achieve the SDP bound at the root relaxation, yet neither
	is competitive: the dense cuts make the node relaxations prohibitively
	expensive, while the single bound inequality offers no structural guidance for
	branching. As a result, both are typically slower than standalone
	\textsc{Gurobi}, and both are dominated by the sparse cuts on every instance
	group. This indicates that the computational benefit of our approach stems
	from the sparse cuts themselves rather than from mere access to the SDP bound.
	
	\begin{table}[h]
		\centering
		\resizebox{\textwidth}{!}{
			\begin{tabular}{lr|cr|cr|cr|cr}
				\toprule
				\multicolumn{2}{c|}{Instances} & \multicolumn{2}{c|}{\textsc{Gurobi}} &
				\multicolumn{2}{c|}{\makecell{Sparse\\Cuts}} &
				\multicolumn{2}{c|}{\makecell{Dense Dual\\SDP Cuts}} &
				\multicolumn{2}{c}{\makecell{SDP Objective\\Bound Cut}} \\
				
				Group & $\#$ & $GC$ & $t$ (\#s) & $GC$ & $t$ (\#s) & $GC$ & $t$ (\#s) & $GC$ & $t$ (\#s) \\
				\midrule
				\multicolumn{10}{l}{\textbf{Solved BoxQCQP} (solved by \textsc{Gurobi} or SDP + Sparse Cuts)}\\
				\addlinespace[2pt]
				$n\in[20, 90]$   & 66 & 0.99 & 1229\ns{65}  & \textbf{1.00} & \textbf{806}\ns{66}   & 0.98 & 10491\ns{49} & 0.99 & 2935\ns{62} \\
				$n\in[100, 150]$ & 22 & 0.99 & 10822\ns{18} & \textbf{1.00} & \textbf{2126}\ns{22}  & 0.97 & 28648\ns{6}  & 0.98 & 18295\ns{12} \\
				$n\in[175, 200]$ &  1 & 0.81 & TO\ns{0}     & \textbf{1.00} & \textbf{23533}\ns{1}  & 0.98 & TO\ns{0}     & 0.98 & TO\ns{0} \\
				\midrule
				\multicolumn{10}{l}{\textbf{Unsolved BoxQCQP} (solved by neither)}\\
				\addlinespace[2pt]
				$n\in[100, 150]$ & 14 & 0.95 & TO & \textbf{0.98} & TO & 0.96 & TO & 0.96 & TO \\
				$n\in[175, 200]$ & 23 & 0.85 & TO & \textbf{0.96} & TO & \textbf{0.96} & TO & \textbf{0.96} & TO \\
				\midrule
				\multicolumn{10}{l}{\textbf{Solved QPLIB} ($n\leq 200$)}\\
				\addlinespace[2pt]
				Fully Dense, QC & 12 & \textbf{1.00} & 4930\ns{12} & \textbf{1.00} & \textbf{4004}\ns{12} & 0.99 & 7517\ns{10} & 0.98 & 18588\ns{6} \\
				Sparse, QC      & 20 & 0.98 & 4314\ns{19} & \textbf{1.00} & \textbf{1763}\ns{20} & 0.68 & 16352\ns{11} & 0.85 & 11993\ns{14} \\
				Fully Dense, LC &  1 & \textbf{1.00} & 298\ns{1} & \textbf{1.00} & 260\ns{1} & \textbf{1.00} & 207\ns{1} & \textbf{1.00} & \textbf{88}\ns{1} \\
				Sparse, LC      & 55 & \textbf{1.00} & \textbf{602}\ns{55} & 0.98 & 13892\ns{39} & 0.93 & 29476\ns{14} & 0.94 & 10287\ns{41} \\
				\midrule
				\multicolumn{10}{l}{\textbf{Unsolved QPLIB} ($n\leq 200$)}\\
				\addlinespace[2pt]
				Fully Dense, QC &  1 & \textbf{0.98} & TO & \textbf{0.98} & TO & 0.89 & TO & 0.87 & TO \\
				Sparse, QC      &  7 & \textbf{0.67} & TO & \textbf{0.67} & TO & 0.64 & 30939\ns{1} & 0.65 & 30975\ns{1} \\
				Sparse, LC      &  3 & 0.86 & TO & 0.86 & TO & \textbf{0.97} & TO & 0.66 & TO \\
				\bottomrule
		\end{tabular}}
		\caption{Comparison of standalone \textsc{Gurobi}, SDP + Sparse Cuts, Dense
			Dual SDP Cuts, and the SDP Objective Bound Cut on all instances. $GC$:
			final gap closed; $t$: mean solution time in seconds; $\#s$: number of
			instances solved to optimality within the time limit; all conventions as in
			Table~\ref{tab:Gurobi_aggregated}.}
		\label{tab:sdpbound-baselines}
	\end{table}

	\section{Conclusions}
	\label{sec:conclusions}
	
	We developed sparse linear inequalities to strengthening relaxations of nonconvex QCQPs. These inequalities are supported only on the lifted variables associated with the aggregate sparsity pattern of the quadratic functions. Using the duality between the PSD-completable cone and the sparse PSD cone, we showed that the resulting semi-infinite LP relaxation has the same optimal value as the Shor SDP relaxation. Under standard regularity assumptions, this value can be recovered using finitely many sparse PSD cuts. We also extended the approach to doubly nonnegative relaxations and formulated structured SDPs for separating the resulting inequalities.
	
	Our computational results demonstrate that these sparse cuts can recover much of the strength of the SDP relaxation while producing LP relaxations that remain inexpensive to reoptimize. When added before spatial branch-and-bound, the cuts substantially improve performance on the structured BoxQCQP instances and are beneficial for several classes of QPLIB instances, although the benefit is less uniform across that more heterogeneous collection. These results suggest that sparse PSD cuts provide a useful mechanism for incorporating the strength of semidefinite relaxations into general-purpose global optimization solvers. Future work includes reducing the cost of the initial SDP and separation computations, developing cut-selection strategies, and generating additional cuts dynamically within branch-and-bound.

\bibliographystyle{splncs04}\bibliography{sdp}

@article{burer.letchford:09,
author = "S. Burer and A. Letchford",
journal = "{SIAM} Journal on Optimization",
title = "On Nonconvex Quadratic Programming with Box Constriants",
volume = 20,
number = 2,
pages = "1073-1089",
year = 2009,
}

@article{dey.et.al:22,
author = "Santanu S. Dey and Aleksandr Kazachkov and Andrea Lodi and Gonzalo Mu{\~n}oz",
title = "Cutting Plane Generation through Sparse Principal Component Analysis",
journal = "SIAM Journal on Optimization",
volume = {32},
number = {2},
pages = {1319-1343},
year = {2022},
}

@article{bonami.gunluk.linderoth:18,
author = "P. Bonami and O. G{\"u}nl{\"u}k and J. Linderoth",
title = "Globally Solving Box-Constrained Nonconvex Quadratic Programming Problems with Box Constraints via Integer Programming Methods",
journal = "Mathematical Programming Computation",
year = 2018,
volume = 10,
pages = "333-382",
}

@article{mccormick:76,
author = "G. P. McCormick",
title = "Computability of Global Solutions to Factorable Nonconvex Programs: {P}art {I}---{Convex} Underestimating Problems",
journal = "Mathematical Programming",
volume = 10,
year = 1976,
pages = "147-175"
}

@article{anstreicher:09,
author = "K. M. Anstreicher", 
title = "Semidefinite Programming Versus the Reformulation-Linearization Technique for Nonconvex Quadratically Constrained Quadratic Programming", 
journal = "Journal of Global Optimization", 
volume = 43,
year = 2009, 
pages = "471-484"
}

@incollection{qualizza.belotti.margot:12,
author = "A. Qualizza and P. Belotti and F. Margot",
title = "Linear Programming Relaxations of Quadratically Constrained Quadratic Programs",
booktitle = "Mixed Integer Nonlinear Programming",
editor = "J. Lee and S. Leyffer",
year = 2012,
pages = "407-426",
publisher = "Springer"
}

@article{blekherman.et.al:22,
author = "Grigoriy Blekherman and Santanu S. Dey and Marco Molinaro and Shengding Sun",
title = "Sparse {PSD} approximation of the {PSD} cone",
journal = "Mathematical Programming",
year = 2022,
volume = 191,
pages = "981–1004",
}

@article{nakata.et.al:03,
author = "Kazuhide Nakata and Katsuki Fujisawa and Mituhiro Fukuda and Masakazu Kojima and Kazuo Murota",
title = "Exploiting sparsity in semidefinite programming via matrix completion {II:} implementation and numerical results",
journal = "Mathematical Programming", 
series = "B",
volume = 95,
pages = "303–327",
year = 2003,
}

@article{ramana1997exact,
title={An Exact Duality Theory for Semidefinite Programming and Its Complexity Implications},
author={Ramana, Motakuri V.},
journal={Mathematical Programming},
volume={77},
number={2},
pages={129--162},
year={1997},
publisher={Springer}
}

@article{vandenberghe1996semidefinite,
title={Semidefinite Programming},
author={Vandenberghe, Lieven and Boyd, Stephen},
journal={SIAM Review},
volume={38},
number={1},
pages={49--95},
year={1996},
publisher={Society for Industrial and Applied Mathematics}
}

@article{shor:90,
author = "N. Z. Shor",
title = "Dual Quadratic Estimates in Polynomial and Boolean Programming",
journal = "Annals of Operations Research",
volume = 25,
pages = "163-168", 
year = 1990
}

@article{fischetti2001polyhedral,
  title={A polyhedral approach to simplified crew scheduling and vehicle scheduling problems},
  author={Fischetti, Matteo and Lodi, Andrea and Martello, Silvano and Toth, Paolo},
  journal={Management Science},
  volume={47},
  number={6},
  pages={833--850},
  year={2001},
  publisher={INFORMS}
}

@article{furini2019qplib,
  title={QPLIB: a library of quadratic programming instances},
  author={Furini, Fabio and Traversi, Emiliano and Belotti, Pietro and Frangioni, Antonio and Gleixner, Ambros and Gould, Nick and Liberti, Leo and Lodi, Andrea and Misener, Ruth and Mittelmann, Hans and others},
  journal={Mathematical Programming Computation},
  volume={11},
  pages={237--265},
  year={2019},
  publisher={Springer Berlin Heidelberg}
}

@Article{harris2020numpy,
 title         = {Array programming with {NumPy}},
 author        = {Charles R. Harris and K. Jarrod Millman and St{\'{e}}fan J.
                 van der Walt and Ralf Gommers and Pauli Virtanen and David
                 Cournapeau and Eric Wieser and Julian Taylor and Sebastian
                 Berg and Nathaniel J. Smith and Robert Kern and Matti Picus
                 and Stephan Hoyer and Marten H. van Kerkwijk and Matthew
                 Brett and Allan Haldane and Jaime Fern{\'{a}}ndez del
                 R{\'{i}}o and Mark Wiebe and Pearu Peterson and Pierre
                 G{\'{e}}rard-Marchant and Kevin Sheppard and Tyler Reddy and
                 Warren Weckesser and Hameer Abbasi and Christoph Gohlke and
                 Travis E. Oliphant},
 year          = {2020},
 month         = sep,
 journal       = {Nature},
 volume        = {585},
 number        = {7825},
 pages         = {357--362},
 doi           = {10.1038/s41586-020-2649-2},
 publisher     = {Springer Science and Business Media {LLC}},
 url           = {https://doi.org/10.1038/s41586-020-2649-2}
}

@article{diamond2016cvxpy,
  author  = {Steven Diamond and Stephen Boyd},
  title   = {{CVXPY}: {A} {P}ython-embedded modeling language for convex optimization},
  journal = {Journal of Machine Learning Research},
  year    = {2016},
  volume  = {17},
  number  = {83},
  pages   = {1--5},
}

@misc{gurobi,
  author = {{Gurobi Optimization, LLC}},
  title = {{Gurobi Optimizer Reference Manual}},
  year = 2024,
  url = "https://www.gurobi.com"
}

@manual{mosek,
   author = "MOSEK ApS",
   title = "The MOSEK Python Fusion API manual. Version 11.0.",
   year = 2025,
   url = "https://docs.mosek.com/latest/pythonfusion/index.html"
}

@article{deroux.carr.ravi:25,
author = "Daniel de Roux and Robert Carr and R. Ravi",
title = "Instance-specific linear relaxations of semidefinite optimization problems",
journal = "Mathematical Programming Computation",
volume = 17, 
page = "385–435",
year = 2025
}

@unpublished{baltean-lugojan.et.al:19,
  author = {Radu Baltean-Lugojan and Pierre Bonami and Ruth Misener and Andrea Tramontani},
  note = {Optimization Online},
  title = {Scoring positive semidefinite cutting planes for quadratic
optimization via trained neural networks},
  year = {2019},
  url = "https://optimization-online.org/wp-content/uploads/2018/11/6943.pdf"
}

@INPROCEEDINGS{hijazi.coffrin.vanhentenryck:16,
  author={Hijazi, Hassan and Coffrin, Carleton and Van Hentenryck, Pascal},
  booktitle={2016 Power Systems Computation Conference (PSCC)}, 
  title={Polynomial SDP cuts for Optimal Power Flow}, 
  year={2016},
  volume={},
  number={},
  pages={1-7},
  doi={10.1109/PSCC.2016.7540908}}

@article{krishnan.mitchell:06,
author = {Kartik Krishnan and John E. Mitchell},
title = {A unifying framework for several cutting plane methods for semidefinite programming},
journal = {Optimization Methods and Software},
volume = {21},
number = {1},
pages = {57--74},
year = {2006},
publisher = {Taylor \& Francis},
doi = {10.1080/10556780500065283},
}

@article{billionnet.elloumi.lambert:12,
author = "Alain Billionnet and Sourour Elloumi and Am{\'e}lie Lambert",
title = "Extending the QCR method to the case of general mixed integer program",
journal = "Mathematical Programming", 
volume = 131,
number = 1,
pages = "381-401",
year = 2012,
}

@article{billionnet.elloumi:07,
author = "A. Billionnet and S. Elloumi", 
title = "Using a mixed integer quadratic programming solver for the unconstrained quadratic 0-1
problem",
journal = "Mathematical Programming",
volume = 109, 
pages = "55-68",
year = 2007,
}

@article{sun.et.al:19,
author = "D. Sun and K.-C. Toh and Y. Yuan and X.-Y. Zhao",
title = "{SDPNAL+: A M}atlab software for semidefinite programming with bound constraints (version 1.0)",
journal = "Optimization Methods and Software", 
pages = "1-29",
year = 2019,
 }

@article{malick.et.al:09,
author = "J. Malick and J. Povh and F. Rendl and A. Wiegele",
title = "Regularization methods for semidefinite programming",
journal = "SIAM Journal on Optimization",
volume = 20,
number = 1,
pages = "336-356",
year = 2009,
}

@book{nesterov.nemirovski:94,
author = "Yurii Nesterov  and Arkadii Nemirovskii",
title = "Interior-Point Polynomial Algorithms in Convex Programming",
publisher = "Society for Industrial and Applied Mathematics",
year = {1994},
}

@article{tawarmalani.sahinidis:05,
author = "M. Tawarmalani and N. V. Sahinidis",
title = "A Polyhedral Branch-and-Cut Approach to Global Optimization",
journal = "Mathematical Programming",
year = 2005,
volume = 103,
number = 2,
pages = "225-249",
}

@article{bestuzheva.et.al:25,
author = {K. Bestuzheva and A. Chmiela and B. M\"{u}ller and F. Serrano and S. Vigerske and F. Wegscheider},
title = "Global optimization of mixed-integer nonlinear programs with {SCIP} 8",
journal = "Journal of Global Optimization",
volume = 91,
number = 2,
pages = "287-310", 
year = 2025,
}

@article{majumdar.et.al:20,
author = "Anirudha Majumdar and Georgina and Amir Ali Ahmadi",
title = "A survey of recent scalability improvements for semidefinite programming with applications in machine learning",
journal = "Annual Reviews: Control, Robotics, and Autonomous Systems",
year = 2020,
volume = 3,
pages = "331-360",
}

@article{sherali.fraticelli:02,
author = "H. D. Sherali and B. M. P. Fraticelli", 
title = "Enhancing {RLT} relaxations via a new class of semidefinite cuts",
journal = "Journal of Global Optimization",
volume = 22, 
pages = "233-261", 
year = 2002
}

@article{kelley:60,
        AUTHOR = "J. E. Kelley",
        TITLE = "The cutting plane method for solving convex programs",
        JOURNAL = "Journal of {SIAM}",
        VOLUME = "8",
        NUMBER = "4", 
        PAGES = "703-712",
        YEAR = "1960"
}

@article{burer:10,
author = "S. Burer",
title = "Optimizing a Polyhedral-Semidefinite Relaxation of Completely Positive Programs",
journal = "Mathematical Programming Computation",
volume = 2,
number = 1,
pages = "1–19",
year = 2010,
}

@article{fukudaetal2001,
author = {Fukuda, Mituhiro and Kojima, Masakazu and Murota, Kazuo and Nakata, Kazuhide},
title = {Exploiting Sparsity in Semidefinite Programming via Matrix Completion I: General Framework},
journal = {SIAM Journal on Optimization},
volume = {11},
number = {3},
pages = {647-674},
year = {2001},
doi = {10.1137/S1052623400366218}
}

@article{burer2003,
author = {Burer, Samuel},
title = {Semidefinite Programming in the Space of Partial Positive Semidefinite Matrices},
journal = {SIAM Journal on Optimization},
volume = {14},
number = {1},
pages = {139-172},
year = {2003},
doi = {10.1137/S105262340240851X}
}

@book{magron2023sparse,
  title={Sparse polynomial optimization: theory and practice},
  author={Magron, Victor and Wang, Jie},
  year={2023},
  publisher={World Scientific}
}

@article{grone.et.al:84,
  author  = {Robert Grone and Charles R. Johnson and Eduardo M. S{\'a} and Henry Wolkowicz},
  title   = {Positive Definite Completions of Partial Hermitian Matrices},
  journal = {Linear Algebra and its Applications},
  volume  = {58},
  pages   = {109--124},
  year    = {1984},
  doi     = {10.1016/0024-3795(84)90207-6}
}

@article{agler.et.al:88,
  author  = {Jim Agler and J. William Helton and Scott McCullough and Leiba Rodman},
  title   = {Positive Semidefinite Matrices with a Given Sparsity Pattern},
  journal = {Linear Algebra and its Applications},
  volume  = {107},
  pages   = {101--149},
  year    = {1988},
  doi     = {10.1016/0024-3795(88)90240-6}
}

@article{vandenberghe.andersen:15,
  author  = {Lieven Vandenberghe and Martin S. Andersen},
  title   = {Chordal Graphs and Semidefinite Optimization},
  journal = {Foundations and Trends in Optimization},
  volume  = {1},
  number  = {4},
  pages   = {241--433},
  year    = {2015},
  doi     = {10.1561/2400000006}
}

@incollection{floudas.visweswaran:95,
  author    = {Christodoulos A. Floudas and V. Visweswaran},
  title     = {Quadratic Optimization},
  booktitle = {Handbook of Global Optimization},
  editor    = {Reiner Horst and Panos M. Pardalos},
  series    = {Nonconvex Optimization and Its Applications},
  volume    = {2},
  pages     = {217--269},
  publisher = {Springer},
  address   = {Boston, MA},
  year      = {1995},
}

@article{burer.letchford:12,
  author  = {Samuel Burer and Adam N. Letchford},
  title   = {Non-Convex Mixed-Integer Nonlinear Programming: A Survey},
  journal = {Surveys in Operations Research and Management Science},
  volume  = {17},
  number  = {2},
  pages   = {97--106},
  year    = {2012},
  doi     = {10.1016/j.sorms.2012.08.001}
}

@article{pezeshki.scharf.chong:10,
  author  = {Ali Pezeshki and Louis L. Scharf and Edwin K. P. Chong},
  title   = {The Geometry of Linearly and Quadratically Constrained Optimization Problems for Signal Processing and Communications},
  journal = {Journal of the Franklin Institute},
  volume  = {347},
  number  = {5},
  pages   = {818--835},
  year    = {2010},
  doi     = {10.1016/j.jfranklin.2010.03.005}
}

	\section*{Funding}
	Luedtke is supported by the Office of Naval Research grant N000142612067.
	
	
	\newpage
	\appendix
	\renewcommand{\thesection}{Appendix~\Alph{section}}
	\titleformat{\section}{\normalfont\Large\bfseries}{\thesection:}{1em}{}

	\section{Benchmark Instances}
	\label{A:instances}
	Our test suite comprises two classes of instances: (i) synthetic box-constrained QCQPs (BoxQCQPs) generated following the procedure in~\cite{burer:10}, and (ii) QCQP instances from the QPLIB~\cite{furini2019qplib}. Details for each class are provided below.

	

	\subsection*{BoxQCQP Instances}
	The first class consists of 126 box-constrained quadratic programs (BoxQCQPs) generated following the procedure in~\cite{burer:10}. Instances are generated for target densities $\rho \in \{0.1, 0.25\}$ and problem sizes $n \in [20,200]$; for $n\leq 60$, only the sparser versions ($\rho = 0.1$) are included. The nonzero entries of the objective coefficients $(\bar{Q}^0, c_0)$ are sampled uniformly from the integer interval $[-50, 50]$.  Unique preprocessing operations can be performed on box-constrained QPs that allow some variables to be treated as binary which can significantly improve computational performance \cite{bonami.gunluk.linderoth:18}.  We wish to focus on aspects related to the performance of PSD cuts, so we augment each problem with $k \in \{5, 10\}$ additional quadratic constraints, whose coefficients are sampled uniformly from $\mathrm{Unif}\{-50, \dots, 50\}$ on the support of the objective. To ensure feasibility, the right-hand side of each constraint is chosen such that the inequality holds with equality at the point $x = [0.5, \dots, 0.5]^{\mathsf T}$.  
	
	For the experiments measuring bound quality in Section~\ref{sec:bounds}, we removed two instances where the SDP bound was equal to the McCormick bound, and six instances where \textsc{Gurobi} or \textsc{Mosek} reported numerical errors in solving an LP or SDP, yielding 118 instances for that experiment.
	
	Instance names follow the pattern \texttt{spar[$n$]-[$\rho$]-[$i$]\_[$k$]qc}, where $n$ is the number of variables, $\rho$ the target density, $k$ the number of quadratic constraints, and $i \in \{1, 2, 3\}$ indexes the instance variation. 
	
	\subsection*{QPLIB Instances}
	The second class of test problems is drawn from the QPLIB benchmark set~\cite{furini2019qplib}, which provides a diverse collection of discrete and continuous quadratic programs. We restrict attention to instances with at most 200 variables to keep the SDP relaxation computationally tractable. Moreover, since the McCormick inequalities require bounded variables, we exclude 23 instances containing unbounded variables, which left 110 potential instances.  For the experiments measuring quality of the dual bounds in Section~\ref{sec:bounds}, we removed 16 instances where the SDP bound was equal to the McCormick bound, and 17 instances where \textsc{Gurobi} or \textsc{Mosek} reported numerical errors in solving an LP or SDP, yielding 77 QPLIB instances for testing bound quality.
	
	In contrast to the BoxQCQP set, the QPLIB instances range from extremely sparse to fully dense. Detailed instance statistics, including the number of variables and densities, are reported in Tables~\ref{tableQPLIB_mod} and~\ref{tableUnsolvedQPLIB}.

	
	
	\section{Computational Environment} 
	\label{A:environment}
	
	All experiments are conducted on a single thread of an \texttt{Intel Xeon Gold 6142} CPU running at 2.60~GHz, with 512~GB of RAM. The algorithms are implemented in Python 3.9.21, utilizing NumPy~2.0.2 \cite{harris2020numpy} for linear algebra computations. \SDPx s are solved using CVXPY~1.6.6 \cite{diamond2016cvxpy} with the Mosek~11.0.24 \cite{mosek} solver, while LPs and QCQPs are solved using the \textsc{Gurobi}~12.0.2 \cite{gurobi} optimizer. 
	
	Cuts are added if violated by more than $10^{-8}$. All solver tolerances are kept at their default values: $10^{-8}$ for both the \textsc{Mosek} SDP and the \textsc{Gurobi} barrier LP, and a relative optimality gap of $10^{-4}$ for branch-and-bound.
	
	\section{Solved BoxQCQP instances}
	\label{app:solvedBoxQCQP}
	
	Table~\ref{tab:solvedBQP} reports the BoxQCQP instances solved to optimality by at least one algorithm within the 10-hour time limit. For space reasons, the 64 instances that \textsc{Gurobi} solves in under 60 seconds are omitted. %
	The columns of the table are: 
	\begin{itemize}\itemsep -.1cm
		\item[-] \texttt{GC}$_{\mathrm{ro}}$ - relative gap closed at the root node; 
		\item[-] \texttt{nodes} - number of B\&B nodes explored; \item[-]\texttt{GC} - final relative gap closed; 
		\item[-]$t$ - total run time (in seconds); 
		\item[-]$t_{\mathrm{SDP}}$ - time spent solving the SDP relaxation; 
		\item[-] \texttt{GC}$_{\mathrm{sdp}}$ - relative gap closed by the SDP relaxation; 
		\item[-]$t_{\mathrm{cuts}}$ - time spent adding sparse cuts; 
		\item[-] \texttt{cuts} - number of sparse cuts; 
		\item[-] \texttt{GC}$_{\mathrm{cuts}}$ - relative gap closed after adding sparse cuts. 
	\end{itemize}
	All times are reported in seconds. 
	\begin{table}[h!]
		\centering
		\resizebox{\textwidth}{!}{\begin{tabular}{l|crcr|rcrrccrcr}
				\toprule
				Instance & \multicolumn{4}{c|}{\textsc{Gurobi}} & \multicolumn{9}{c}{SDP + Sparse Cuts + \textsc{Gurobi}} \\
				Name & $GC_{ro}$ & $nodes$ & $GC$ & $t~~$ & $t_{SDP}$ & $GC_{SDP}$ & $t_{cuts}$ & $cuts$ & $GC_{cuts}$ & $GC_{ro}$ & $nodes$ & $GC$ & $t~~$ \\
				\midrule
				070-025-1\_10qc & 0.86 & 4e+4 & \textbf{1.00} & \textbf{1965} & 13 & 0.90 & 5 & 3 & 0.88 & 0.90 & 1e+5 & \textbf{1.00} & 5668 \\
				070-025-2\_10qc & 0.85 & 4e+4 & \textbf{1.00} & 2098 & 13 & 0.92 & 6 & 4 & 0.89 & 0.92 & 4e+4 & \textbf{1.00} & \textbf{1942} \\
				070-025-2\_5qc & 0.90 & 3e+3 & \textbf{1.00} & 196 & 14 & 0.96 & 6 & 3 & 0.91 & 0.95 & 1e+3 & \textbf{1.00} & \textbf{60} \\
				080-025-1\_10qc & 0.88 & 5e+4 & \textbf{1.00} & \textbf{4439} & 20 & 0.90 & 15 & 4 & 0.87 & 0.90 & 1e+5 & \textbf{1.00} & 11795 \\
				080-025-1\_5qc & 0.91 & 2e+4 & \textbf{1.00} & 1651 & 17 & 0.93 & 11 & 5 & 0.90 & 0.93 & 7e+3 & \textbf{1.00} & \textbf{732} \\
				080-025-2\_10qc & 0.88 & 1e+5 & \textbf{1.00} & 13209 & 20 & 0.95 & 14 & 4 & 0.90 & 0.94 & 3e+4 & \textbf{1.00} & \textbf{6290} \\
				080-025-2\_5qc & 0.89 & 2e+4 & \textbf{1.00} & 2772 & 19 & 0.96 & 9 & 3 & 0.91 & 0.96 & 3e+3 & \textbf{1.00} & \textbf{222} \\
				090-025-1\_10qc & 0.91 & 8e+4 & 0.96 & TO & 33 & 0.93 & 28 & 5 & 0.89 & 0.93 & 6e+4 & \textbf{1.00} & \textbf{20657} \\
				090-025-1\_5qc & 0.94 & 3e+4 & \textbf{1.00} & 10544 & 24 & 0.96 & 13 & 4 & 0.92 & 0.96 & 2e+3 & \textbf{1.00} & \textbf{287} \\
				090-025-2\_10qc & 0.93 & 7e+3 & \textbf{1.00} & 3609 & 31 & 0.95 & 19 & 4 & 0.92 & 0.95 & 6e+3 & \textbf{1.00} & \textbf{1735} \\
				090-025-2\_5qc & 0.93 & 1e+4 & \textbf{1.00} & 4268 & 34 & 0.95 & 16 & 3 & 0.91 & 0.95 & 8e+3 & \textbf{1.00} & \textbf{1413} \\
				090-025-3\_10qc & 0.96 & 3e+2 & \textbf{1.00} & \textbf{79} & 35 & 0.96 & 20 & 4 & 0.92 & 0.96 & 4e+2 & \textbf{1.00} & 124 \\
				100-025-2\_5qc & 0.94 & 5e+3 & \textbf{1.00} & 5691 & 51 & 0.97 & 67 & 7 & 0.93 & 0.97 & 5e+2 & \textbf{1.00} & \textbf{348} \\
				100-025-3\_10qc & 0.93 & 3e+4 & 0.98 & TO & 43 & 0.96 & 35 & 5 & 0.93 & 0.96 & 2e+4 & \textbf{1.00} & \textbf{18274} \\
				100-025-3\_5qc & 0.97 & 8e+2 & \textbf{1.00} & 387 & 59 & 0.98 & 56 & 6 & 0.94 & 0.98 & 6e+2 & \textbf{1.00} & \textbf{323} \\
				125-010-1\_10qc & 0.70 & 3e+4 & \textbf{1.00} & 1423 & 134 & 0.93 & 185 & 3 & 0.89 & 0.92 & 8e+3 & \textbf{1.00} & \textbf{920} \\
				125-010-1\_5qc & 0.70 & 2e+4 & \textbf{1.00} & 787 & 102 & 0.94 & 132 & 3 & 0.91 & 0.93 & 2e+3 & \textbf{1.00} & \textbf{329} \\
				125-025-3\_5qc & 0.96 & 8e+3 & \textbf{1.00} & 31963 & 152 & 0.98 & 197 & 6 & 0.94 & 0.98 & 2e+3 & \textbf{1.00} & \textbf{2083} \\
				150-010-1\_10qc & 0.74 & 3e+5 & 0.90 & TO & 217 & 0.96 & 223 & 2 & 0.94 & 0.95 & 2e+4 & \textbf{1.00} & \textbf{3690} \\
				150-010-1\_5qc & 0.74 & 4e+5 & 0.92 & TO & 352 & 0.95 & 157 & 1 & 0.91 & 0.95 & 1e+5 & \textbf{1.00} & \textbf{10080} \\
				150-010-2\_10qc & 0.78 & 3e+5 & 0.98 & TO & 297 & 0.95 & 296 & 2 & 0.93 & 0.95 & 7e+3 & \textbf{1.00} & \textbf{2049} \\
				150-010-2\_5qc & 0.79 & 3e+5 & \textbf{1.00} & 28832 & 345 & 0.95 & 324 & 2 & 0.93 & 0.95 & 7e+3 & \textbf{1.00} & \textbf{1621} \\
				150-010-3\_10qc & 0.78 & 2e+5 & \textbf{1.00} & 23241 & 341 & 0.95 & 551 & 3 & 0.92 & 0.94 & 2e+4 & \textbf{1.00} & \textbf{3886} \\
				150-010-3\_5qc & 0.81 & 2e+4 & \textbf{1.00} & 1670 & 329 & 0.96 & 361 & 2 & 0.92 & 0.96 & 1e+3 & \textbf{1.00} & \textbf{787} \\
				200-010-3\_5qc & 0.77 & 3e+4 & 0.81 & TO & 1724 & 0.98 & 1934 & 2 & 0.95 & 0.97 & 3e+4 & \textbf{1.00} & \textbf{23533} \\
				
				\bottomrule
		\end{tabular}}
		\caption{BoxQCQP instances solved by at least one algorithm. For space
			reasons, only instances that \textsc{Gurobi} could not solve within 60 seconds
			are listed.}
		\label{tab:solvedBQP}
	\end{table}
	
	

	
	Figure~\ref{fig:BoxGQQP-bars} depicts the computing times for the 25 BoxQCQP instances solved to optimality by at least one method. Blue bars indicate the runtime of standalone \textsc{Gurobi}, while the multicolored bars represent the runtime breakdown of our approach: solving the SDP relaxation (black), separating the sparse cuts (gray), and solving the resulting QCQP with spatial \BnB using \textsc{Gurobi} (orange). The total height of each multicolored bar is the overall runtime of our method.

	{\begin{figure}[h!]
			\centering\includegraphics[width=.7\textwidth]{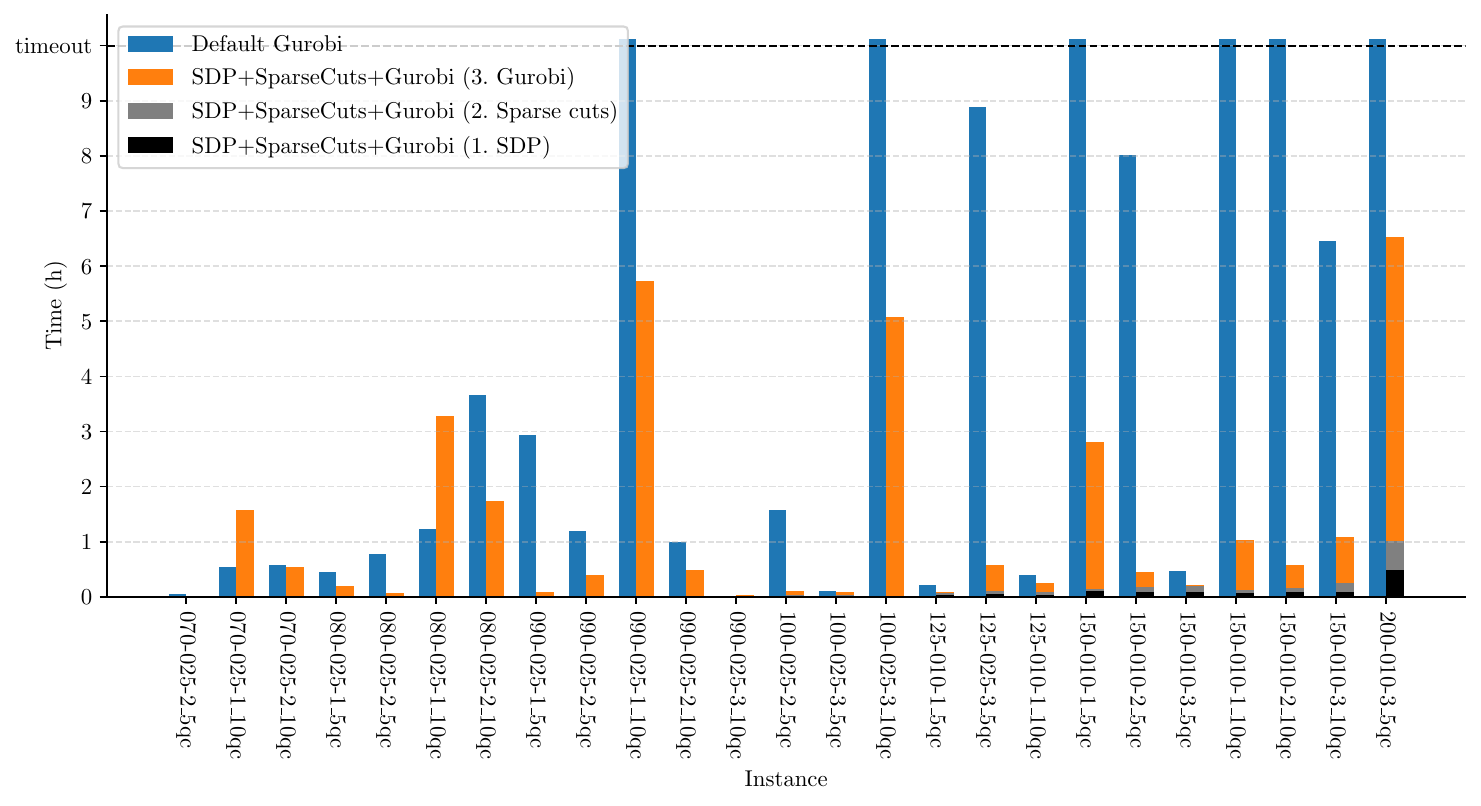}
			\caption{Computing time of the 25 \texttt{Solved BoxQCQP} instances in Table \ref{tab:solvedBQP}.}
			\label{fig:BoxGQQP-bars}
		\end{figure}
	}

\newpage.\newpage\section{Unsolved BoxQCQP instances}
	\label{app:unsolvedBoxQCQP}
	
	In Table \ref{tableUnsolvedBoxQP}, we report on the BoxQCQP instances solved by neither of the two methods within the 10-hour time limit. 

	\begin{table}[h]
		\centering\scriptsize
		\resizebox{\textwidth}{!}{\begin{tabular}{l|rccr|rcrrccrcr}
				\toprule
				Instance & \multicolumn{4}{c|}{\textsc{Gurobi}} & \multicolumn{9}{c}{SDP + Sparse Cuts + \textsc{Gurobi}} \\
				Name & $GC_{ro}$ & $nodes$ & $GC$ & $t~~$ & $t_{SDP}$ & $GC_{SDP}$ & $t_{cuts}$ & $cuts$ & $GC_{cuts}$ & $GC_{ro}$ & $nodes$ & $GC$ & $t~~$ \\
				\midrule
				100-025-1\_10qc & 0.89 & 3e+4 & 0.94 & TO & 49 & 0.94 & 44 & 5 & 0.89 & 0.94 & 5e+4 & \textbf{0.97} & TO \\
				100-025-1\_5qc & 0.90 & 4e+4 & 0.95 & TO & 40 & 0.95 & 32 & 5 & 0.90 & 0.94 & 5e+4 & \textbf{0.98} & TO \\
				100-025-2\_10qc & 0.89 & 3e+4 & 0.93 & TO & 56 & 0.94 & 60 & 6 & 0.90 & 0.93 & 5e+4 & \textbf{0.97} & TO \\
				125-025-1\_10qc & 0.89 & 7e+3 & 0.91 & TO & 145 & 0.95 & 175 & 6 & 0.90 & 0.94 & 9e+3 & \textbf{0.95} & TO \\
				125-025-1\_5qc & 0.90 & 8e+3 & 0.92 & TO & 116 & 0.95 & 143 & 6 & 0.91 & 0.94 & 1e+4 & \textbf{0.96} & TO \\
				125-025-2\_10qc & 0.95 & 6e+3 & 0.97 & TO & 136 & 0.97 & 150 & 6 & 0.93 & 0.97 & 8e+3 & \textbf{0.99} & TO \\
				125-025-2\_5qc & 0.95 & 7e+3 & 0.97 & TO & 138 & 0.98 & 151 & 6 & 0.93 & 0.97 & 9e+3 & \textbf{0.99} & TO \\
				125-025-3\_10qc & 0.94 & 8e+3 & 0.96 & TO & 146 & 0.96 & 171 & 6 & 0.92 & 0.96 & 1e+4 & \textbf{0.98} & TO \\
				150-025-1\_10qc & 0.93 & 3e+3 & 0.93 & TO & 268 & 0.97 & 369 & 6 & 0.92 & 0.97 & 5e+3 & \textbf{0.97} & TO \\
				150-025-1\_5qc & 0.94 & 3e+3 & 0.95 & TO & 380 & 0.98 & 584 & 7 & 0.94 & 0.97 & 4e+3 & \textbf{0.98} & TO \\
				150-025-2\_10qc & 0.93 & 3e+3 & 0.94 & TO & 353 & 0.97 & 422 & 5 & 0.92 & 0.96 & 5e+3 & \textbf{0.97} & TO \\
				150-025-2\_5qc & 0.94 & 3e+3 & 0.95 & TO & 353 & 0.97 & 409 & 5 & 0.93 & 0.97 & 5e+3 & \textbf{0.98} & TO \\
				150-025-3\_10qc & 0.95 & 3e+3 & 0.96 & TO & 373 & 0.98 & 475 & 6 & 0.93 & 0.98 & 4e+3 & \textbf{0.98} & TO \\
				150-025-3\_5qc & 0.95 & 3e+3 & 0.97 & TO & 366 & 0.98 & 516 & 7 & 0.94 & 0.98 & 5e+3 & \textbf{0.99} & TO \\
				175-010-1\_10qc & 0.70 & 1e+5 & 0.78 & TO & 767 & 0.95 & 831 & 2 & 0.93 & 0.95 & 6e+4 & \textbf{0.96} & TO \\
				175-010-1\_5qc & 0.72 & 1e+5 & 0.82 & TO & 616 & 0.96 & 706 & 2 & 0.94 & 0.95 & 1e+5 & \textbf{0.98} & TO \\
				175-010-2\_10qc & 0.73 & 1e+5 & 0.81 & TO & 732 & 0.94 & 718 & 2 & 0.92 & 0.94 & 7e+4 & \textbf{0.96} & TO \\
				175-010-2\_5qc & 0.77 & 9e+4 & 0.86 & TO & 816 & 0.96 & 841 & 2 & 0.93 & 0.96 & 1e+5 & \textbf{0.98} & TO \\
				175-010-3\_10qc & 0.69 & 1e+5 & 0.76 & TO & 796 & 0.93 & 820 & 2 & 0.91 & 0.92 & 6e+4 & \textbf{0.94} & TO \\
				175-010-3\_5qc & 0.69 & 2e+5 & 0.77 & TO & 707 & 0.93 & 1112 & 3 & 0.91 & 0.92 & 2e+5 & \textbf{0.94} & TO \\
				175-025-1\_10qc & 0.91 & 1e+3 & 0.91 & TO & 795 & 0.96 & 1684 & 8 & 0.92 & 0.96 & 3e+3 & \textbf{0.96} & TO \\
				175-025-1\_5qc & 0.93 & 1e+3 & 0.93 & TO & 882 & 0.97 & 1864 & 8 & 0.93 & 0.97 & 2e+3 & \textbf{0.97} & TO \\
				175-025-2\_10qc & 0.93 & 1e+3 & 0.94 & TO & 913 & 0.97 & 1611 & 7 & 0.93 & 0.97 & 2e+3 & \textbf{0.97} & TO \\
				175-025-2\_5qc & 0.93 & 1e+3 & 0.94 & TO & 881 & 0.97 & 1689 & 7 & 0.93 & 0.97 & 2e+3 & \textbf{0.97} & TO \\
				175-025-3\_10qc & 0.94 & 1e+3 & 0.94 & TO & 778 & 0.98 & 1607 & 8 & 0.93 & 0.97 & 2e+3 & \textbf{0.98} & TO \\
				175-025-3\_5qc & 0.94 & 1e+3 & 0.94 & TO & 912 & 0.98 & 1766 & 8 & 0.93 & 0.97 & 2e+3 & \textbf{0.98} & TO \\
				200-010-1\_10qc & 0.72 & 9e+3 & 0.75 & TO & 1865 & 0.94 & 1858 & 2 & 0.92 & 0.94 & 2e+4 & \textbf{0.95} & TO \\
				200-010-1\_5qc & 0.73 & 2e+4 & 0.78 & TO & 1568 & 0.95 & 1673 & 2 & 0.93 & 0.95 & 8e+4 & \textbf{0.96} & TO \\
				200-010-2\_10qc & 0.69 & 3e+4 & 0.73 & TO & 1643 & 0.95 & 1735 & 2 & 0.91 & 0.92 & 9e+3 & \textbf{0.93} & TO \\
				200-010-2\_5qc & 0.70 & 4e+4 & 0.76 & TO & 1682 & 0.96 & 1763 & 2 & 0.92 & 0.94 & 2e+4 & \textbf{0.96} & TO \\
				200-010-3\_10qc & 0.73 & 2e+4 & 0.75 & TO & 1403 & 0.95 & 1579 & 2 & 0.92 & 0.94 & 2e+4 & \textbf{0.95} & TO \\
				200-025-1\_10qc & 0.91 & 1e+3 & 0.91 & TO & 1790 & 0.97 & 4318 & 10 & 0.92 & 0.96 & 1e+3 & \textbf{0.96} & TO \\
				200-025-1\_5qc & 0.92 & 8e+2 & 0.92 & TO & 1812 & 0.97 & 4083 & 9 & 0.93 & 0.97 & 1e+3 & \textbf{0.97} & TO \\
				200-025-2\_10qc & 0.90 & 1e+3 & 0.90 & TO & 1686 & 0.95 & 4317 & 10 & 0.91 & 0.95 & 1e+3 & \textbf{0.95} & TO \\
				200-025-2\_5qc & 0.91 & 9e+2 & 0.91 & TO & 1762 & 0.96 & 4031 & 9 & 0.92 & 0.96 & 1e+3 & \textbf{0.96} & TO \\
				200-025-3\_10qc & 0.91 & 9e+2 & 0.91 & TO & 1801 & 0.97 & 4401 & 9 & 0.92 & 0.96 & 1e+3 & \textbf{0.96} & TO \\
				200-025-3\_5qc & 0.91 & 1e+3 & 0.91 & TO & 1729 & 0.97 & 3919 & 9 & 0.93 & 0.96 & 1e+3 & \textbf{0.97} & TO \\
				\bottomrule
			\end{tabular}
		}
		\caption{\texttt{BoxQCQP} instances solved by neither of the two methods.}\label{tableUnsolvedBoxQP}
	\end{table}
	

\newpage	\section{Solved QPLIB instances}
	\label{app:solvedQPLIB}
	
	In Table \ref{tableQPLIB_mod}, we report on the QPLIB instances solved by at least one algorithm within the 10-hour time limit. For space reasons, the 50 instances that \textsc{Gurobi} solves in under 60 seconds are omitted. 
	\begin{table}[h!]
		\centering
		\resizebox{\textwidth}{!}{
			\begin{tabular}{lrcc|crcr|rcrrccrcr}
				\toprule
				\multicolumn{4}{c|}{Instance} & \multicolumn{4}{c|}{Gurobi} & \multicolumn{9}{c}{SDP + Sparse Cuts + Gurobi} \\
				Name & n & $\rho$ & $\rho_{QC}$ & $GC_{ro}$ & nodes & $GC$ & $t$ & $t_{sdp}$ & $GC_{sdp}$ & $t_{cuts}$ & cuts & $GC_{cuts}$ & $GC_{ro}$ & nodes & $GC$ & $t$ \\
				\midrule
				\multicolumn{15}{l}{\textbf{Fully Dense, QC}}\\
				\addlinespace[2pt]
				QPLIB\_1055 & 40 & 1.00 & 0.52 & 0.98 & 5.6e2 & \textbf{1.00} & 102 & 4 & 0.98 & 4 & 20 & 0.62 & 0.99 & 2.4e2 & \textbf{1.00} & \textbf{55} \\
				QPLIB\_0911 & 50 & 1.00 & 0.26 & 0.96 & 8.6e2 & \textbf{1.00} & 1152 & 8 & 0.98 & 3 & 25 & 0.37 & 0.98 & 4.0e2 & \textbf{1.00} & \textbf{334} \\
				QPLIB\_0975 & 50 & 1.00 & 0.26 & 0.97 & 4.8e2 & \textbf{1.00} & 293 & 8 & 0.98 & 2 & 25 & 0.38 & 0.99 & 1.3e2 & \textbf{1.00} & \textbf{177} \\
				QPLIB\_1745 & 50 & 1.00 & 0.25 & 0.97 & 9.7e1 & \textbf{1.00} & \textbf{114} & 9 & 0.82 & 69 & 25 & 0.39 & 0.97 & 1.2e2 & \textbf{1.00} & 248 \\
				QPLIB\_1967 & 50 & 1.00 & 0.51 & 0.96 & 1.1e4 & \textbf{1.00} & 24075 & 9 & 0.96 & 5 & 25 & 0.64 & 0.97 & 8.9e3 & \textbf{1.00} & \textbf{19256} \\
				QPLIB\_1535 & 60 & 1.00 & 0.49 & 0.94 & 4.0e3 & \textbf{1.00} & 27236 & 17 & 0.91 & 200 & 30 & 0.55 & 0.95 & 3.1e3 & \textbf{1.00} & \textbf{24058} \\
				QPLIB\_1703 & 60 & 1.00 & 0.50 & 0.95 & 1.4e3 & \textbf{1.00} & 6044 & 14 & 0.95 & 154 & 30 & 0.62 & 0.98 & 8.8e2 & \textbf{1.00} & \textbf{3650} \\
				\midrule
				\multicolumn{15}{l}{\textbf{Sparse, QC}}\\
				\addlinespace[2pt]
				QPLIB\_1922 & 30 & 0.48 & 0.25 & 0.00 & 2.0e4 & \textbf{1.00} & 111 & 1 & 0.89 & 0 & 7 & 0.84 & 0.84 & 1.1e3 & \textbf{1.00} & \textbf{17} \\
				QPLIB\_1931 & 40 & 0.48 & 0.25 & 0.00 & 4.8e5 & \textbf{1.00} & 4038 & 3 & 0.98 & 1 & 9 & 0.94 & 0.94 & 1.9e2 & \textbf{1.00} & \textbf{9} \\
				QPLIB\_1913 & 48 & 0.48 & 0.13 & 0.00 & 4.3e4 & \textbf{1.00} & 1879 & 5 & 0.94 & 6 & 23 & 0.89 & 0.89 & 1.2e3 & \textbf{1.00} & \textbf{124} \\
				QPLIB\_1940 & 48 & 0.48 & 0.13 & 0.00 & 7.5e4 & \textbf{1.00} & 4165 & 5 & 0.92 & 7 & 24 & 0.87 & 0.87 & 7.7e2 & \textbf{1.00} & \textbf{105} \\
				QPLIB\_1437 & 50 & 0.99 & 0.49 & 0.84 & 3.5e2 & \textbf{1.00} & 162 & 8 & 0.84 & 46 & 25 & 0.57 & 0.99 & 3.9e1 & \textbf{1.00} & \textbf{107} \\
				QPLIB\_1886 & 50 & 0.49 & 0.25 & 0.00 & 1.4e6 & 0.64 & TO & 6 & 0.95 & 4 & 12 & 0.90 & 0.90 & 1.2e4 & \textbf{1.00} & \textbf{1136} \\
				QPLIB\_1661 & 60 & 0.99 & 0.49 & 0.85 & 3.7e2 & \textbf{1.00} & \textbf{553} & 14 & 0.89 & 120 & 30 & 0.67 & 0.99 & 1.9e2 & \textbf{1.00} & 929 \\
				QPLIB\_1773 & 60 & 0.99 & 0.49 & 0.87 & 6.0e3 & \textbf{1.00} & 35341 & 14 & 0.93 & 101 & 30 & 0.54 & 0.96 & 3.5e3 & \textbf{1.00} & \textbf{21565} \\
				QPLIB\_3562 & 63 & 0.02 & 0.01 & 0.89 & 1.7e5 & \textbf{1.00} & \textbf{196} & 7 & 0.00 & 0 & 0 & 0.00 & 0.89 & 1.7e5 & \textbf{1.00} & 204 \\
				QPLIB\_2055 & 153 & 0.20 & 0.01 & 0.03 & 3.1e3 & \textbf{1.00} & \textbf{527} & 316 & 0.00 & 0 & 0 & 0.00 & 0.03 & 3.1e3 & \textbf{1.00} & 843 \\
				QPLIB\_2060 & 171 & 0.19 & 0.01 & 0.05 & 9.1e3 & \textbf{1.00} & \textbf{2482} & 575 & 0.00 & 0 & 0 & 0.00 & 0.05 & 9.1e3 & \textbf{1.00} & 3074 \\
				QPLIB\_2067 & 190 & 0.18 & 0.01 & 0.01 & 1.5e3 & \textbf{1.00} & \textbf{564} & 959 & 0.00 & 0 & 0 & 0.00 & 0.01 & 1.5e3 & \textbf{1.00} & 1530 \\
				QPLIB\_3512 & 191 & 0.00 & 0.01 & 0.18 & 4.0e4 & \textbf{1.00} & \textbf{129} & 1477 & 0.00 & 0 & 0 & 0.00 & 0.18 & 4.0e4 & \textbf{1.00} & 1603 \\
				\midrule
				\multicolumn{15}{l}{\textbf{Fully Dense, LC}}\\
				\addlinespace[2pt]
				QPLIB\_0343 & 50 & 1.00 & 0.00 & 0.95 & 3.0e3 & \textbf{1.00} & 298 & 8 & 0.42 & 25 & 25 & 0.14 & 0.98 & 4.5e3 & \textbf{1.00} & \textbf{260} \\
				\midrule
				\multicolumn{15}{l}{\textbf{Sparse, LC}}\\
				\addlinespace[2pt]
				QPLIB\_0633 & 75 & 0.97 & 0.00 & 0.80 & 2.0e6 & \textbf{1.00} & \textbf{749} & 27 & 0.99 & 314 & 38 & 0.58 & 0.90 & 9.9e2 & \textbf{1.00} & 1459 \\
				QPLIB\_0067 & 80 & 0.87 & 0.00 & 0.00 & 5.7e5 & \textbf{1.00} & \textbf{135} & 31 & 0.00 & 0 & 0 & 0.00 & 0.00 & 5.7e5 & \textbf{1.00} & 166 \\
				QPLIB\_2512 & 100 & 0.76 & 0.00 & 0.00 & 4.1e5 & \textbf{1.00} & \textbf{120} & 63 & 0.00 & 0 & 0 & 0.00 & 0.00 & 4.1e5 & \textbf{1.00} & 185 \\
				QPLIB\_5935 & 100 & 0.98 & 0.00 & 0.72 & 1.5e5 & \textbf{1.00} & \textbf{183} & 66 & 0.32 & 21 & 50 & 0.24 & 0.94 & 3.6e3 & \textbf{1.00} & 5249 \\
				QPLIB\_5881 & 120 & 0.29 & 0.00 & 0.89 & 2.7e3 & \textbf{1.00} & \textbf{337} & 105 & 0.97 & 125 & 8 & 0.92 & 0.97 & 1.9e3 & \textbf{1.00} & 1067 \\
				QPLIB\_3402 & 144 & 0.80 & 0.00 & 0.00 & 8.3e6 & \textbf{1.00} & \textbf{3872} & 352 & 0.00 & 0 & 0 & 0.00 & 0.00 & 8.3e6 & \textbf{1.00} & 4411 \\
				QPLIB\_3751 & 150 & 0.32 & 0.00 & 0.36 & 3.2e3 & \textbf{1.00} & \textbf{482} & 330 & 0.98 & 684 & 17 & 0.81 & 0.98 & 2.7e1 & \textbf{1.00} & 1058 \\
				QPLIB\_5962 & 150 & 0.98 & 0.00 & 0.73 & 2.5e6 & \textbf{1.00} & \textbf{6765} & 307 & 0.32 & 106 & 75 & 0.24 & 0.91 & 9.0e2 & 0.95 & TO \\
				QPLIB\_5971 & 150 & 0.98 & 0.00 & 0.89 & 3.1e4 & \textbf{1.00} & \textbf{87} & 316 & 0.31 & 106 & 75 & 0.23 & 0.97 & 2.1e3 & \textbf{1.00} & 8340 \\
				QPLIB\_5980 & 150 & 0.98 & 0.00 & 0.99 & 4.8e2 & \textbf{1.00} & \textbf{78} & 351 & 0.31 & 119 & 75 & 0.23 & 0.99 & 3.4e2 & \textbf{1.00} & 767 \\
				QPLIB\_10048 & 150 & 0.98 & 0.00 & 0.97 & 3.1e5 & \textbf{1.00} & \textbf{123} & 661 & 0.97 & 133 & 75 & 0.39 & 0.94 & 6.3e2 & 0.95 & TO \\
				QPLIB\_3775 & 180 & 0.32 & 0.00 & 0.00 & 3.6e6 & \textbf{1.00} & \textbf{2089} & 1236 & 0.98 & 1832 & 16 & 0.82 & 0.99 & 1.1e1 & \textbf{1.00} & 3166 \\
				QPLIB\_3523 & 182 & 0.13 & 0.00 & 0.27 & 4.8e3 & \textbf{1.00} & \textbf{844} & 935 & 0.83 & 6534 & 20 & 0.50 & 0.71 & 9.3e3 & \textbf{1.00} & 17888 \\
				QPLIB\_3870 & 182 & 0.22 & 0.00 & 0.10 & 1.1e4 & \textbf{1.00} & \textbf{8517} & 1012 & 0.81 & 6215 & 26 & 0.44 & 0.51 & 1.2e4 & 0.86 & TO \\
				QPLIB\_3883 & 182 & 0.17 & 0.00 & 0.26 & 9.0e3 & \textbf{1.00} & \textbf{7943} & 935 & 0.80 & 6374 & 25 & 0.48 & 0.55 & 9.0e3 & 0.86 & TO \\
				QPLIB\_3803 & 190 & 0.13 & 0.00 & 0.47 & 1.4e3 & \textbf{1.00} & \textbf{367} & 1242 & 0.79 & 6120 & 14 & 0.10 & 0.81 & 2.4e2 & \textbf{1.00} & 7695 \\
				QPLIB\_10058 & 200 & 0.88 & 0.00 & 0.97 & 4.4e5 & \textbf{1.00} & \textbf{242} & 3718 & 0.97 & 55 & 10 & 0.00 & 0.98 & 7.1e1 & \textbf{1.00} & 6691 \\
				\bottomrule
		\end{tabular}}
		\caption{QPLIB instances solved by at least one algorithm within the time
			limit of 10h. For space reasons, only instances that \textsc{Gurobi} could not
			solve within 60 seconds are listed.}
		\label{tableQPLIB_mod}
	\end{table}
	

	In Figure \ref{instanceRuntimeBarsQPLib}, we show the runtimes for each component of our approach and compare it with \textsc{Gurobi} on the QPLIB instances of Table~\ref{tableQPLIB_mod}. 
	
	\begin{figure}[h!]
		\centering
		\includegraphics[width=.81\textwidth]{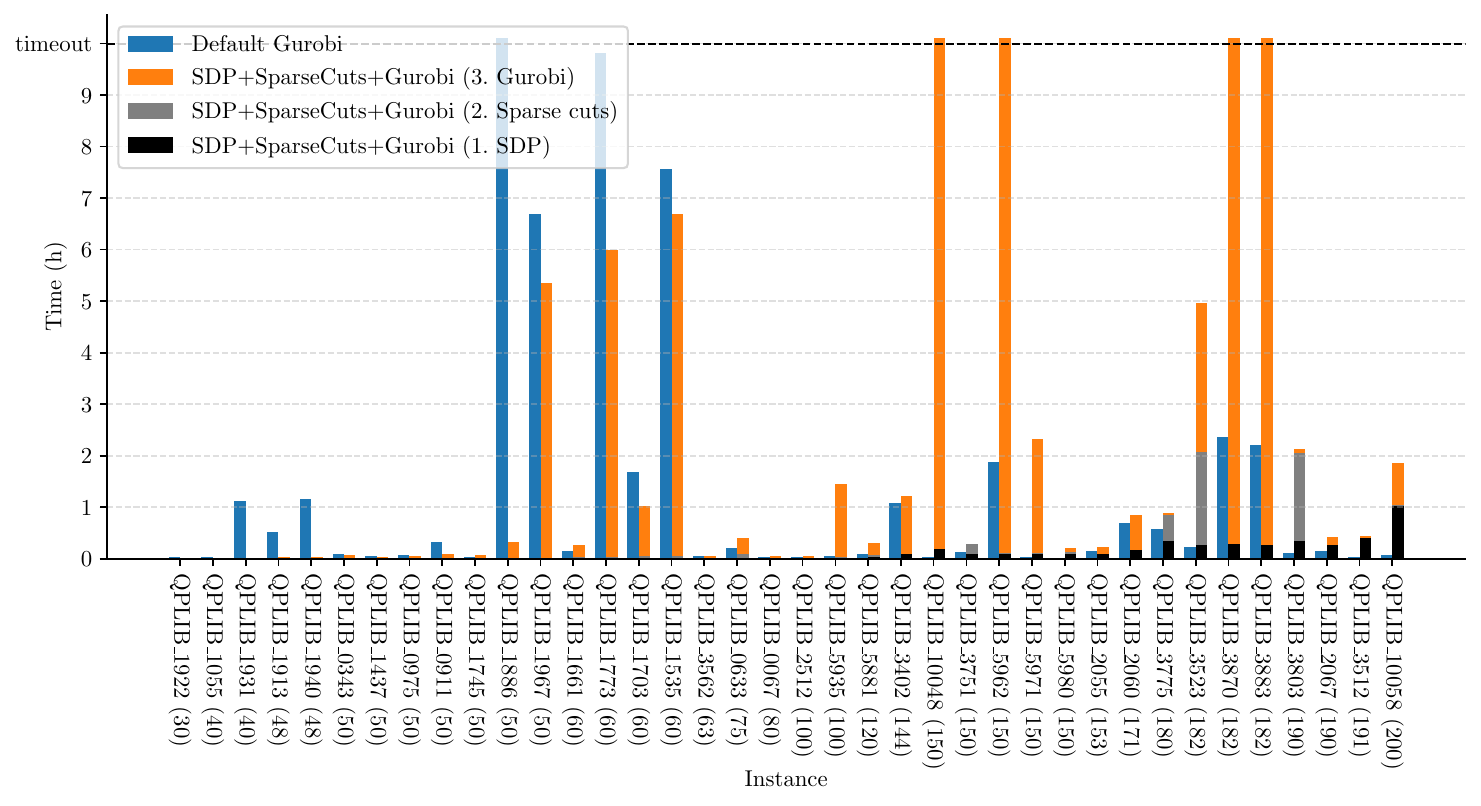}
		\caption{QPLIB instances solved by at least one algorithm within the time limit of 10h.}\label{instanceRuntimeBarsQPLib}
	\end{figure}
	
	\newpage\section{Unsolved QPLIB instances}
	\label{app:unsolvedQPLIB}
	
	In Table \ref{tableUnsolvedQPLIB}, we report on the QPLIB instances solved by neither of the two methods within the 10-hour time limit.
	
	\begin{table}[h!]
		\centering
		\resizebox{\textwidth}{!}{
			\begin{tabular}{lrcc|rccr|rcrrccrcr}
				\toprule
				\multicolumn{4}{c|}{Instance} & \multicolumn{4}{c|}{Gurobi} & \multicolumn{9}{c}{SDP + Sparse Cuts + Gurobi} \\
				Name & n & $\rho$ & $\rho_{QC}$ & $GC_{ro}$ & nodes & GC & $t$ & $t_{sdp}$ & $GC_{sdp}$ & $t_{cuts}$ & cuts & $GC_{cuts}$ & $GC_{ro}$ & nodes & GC & $t$ \\
				\midrule
				\multicolumn{15}{l}{\textbf{Fully Dense, QC}}\\
				\addlinespace[2pt]
				QPLIB\_1451 & 60 & 1.00 & 0.25 & 0.92 & 6.1e3 & \textbf{0.98} & TO & 12 & 0.87 & 133 & 30 & 0.30 & 0.93 & 6.1e3 & \textbf{0.98} & TO \\
				\midrule
				\multicolumn{15}{l}{\textbf{Sparse, QC}}\\
				\addlinespace[2pt]
				QPLIB\_2416 & 25 & 0.79 & 0.05 & 0.00 & 1.1e7 & \textbf{0.63} & TO & 1 & 0.00 & 0 & 0 & 0.00 & 0.00 & 1.1e7 & \textbf{0.63} & TO \\
				QPLIB\_2590 & 25 & 0.79 & 0.05 & 0.00 & 1.1e7 & \textbf{0.65} & TO & 1 & 0.00 & 0 & 0 & 0.00 & 0.00 & 1.1e7 & \textbf{0.65} & TO \\
				QPLIB\_3318 & 25 & 0.79 & 0.05 & 0.00 & 1.2e7 & \textbf{0.61} & TO & 1 & 0.00 & 0 & 0 & 0.00 & 0.00 & 1.2e7 & \textbf{0.61} & TO \\
				QPLIB\_1675 & 60 & 0.73 & 0.25 & 0.87 & 2.3e4 & \textbf{0.98} & TO & 11 & 0.88 & 53 & 30 & 0.75 & 0.92 & 2.1e4 & \textbf{0.98} & TO \\
				QPLIB\_3358 & 158 & 0.00 & 0.01 & 0.00 & 3.6e7 & \textbf{0.00} & TO & 424 & 0.00 & 0 & 0 & 0.00 & 0.00 & 3.6e7 & \textbf{0.00} & TO \\
				QPLIB\_3780 & 168 & 0.01 & 0.01 & 0.88 & 9.2e6 & \textbf{0.89} & TO & 747 & 0.00 & 0 & 0 & 0.00 & 0.88 & 9.6e6 & \textbf{0.89} & TO \\
				QPLIB\_2658 & 184 & 0.00 & 0.01 & 0.00 & 3.6e7 & \textbf{0.92} & TO & 359 & 0.00 & 0 & 0 & 0.00 & 0.00 & 3.6e7 & \textbf{0.92} & TO \\
				\midrule
				\multicolumn{15}{l}{\textbf{Sparse, LC}}\\
				\addlinespace[2pt]
				QPLIB\_5882 & 150 & 0.77 & 0.00 & 0.94 & 8.3e6 & 0.94 & TO & 344 & 0.98 & 104 & 37 & 0.84 & 0.95 & 1.1e3 & \textbf{0.97} & TO \\
				QPLIB\_2492 & 196 & 0.85 & 0.00 & 0.00 & 3.9e7 & \textbf{0.66} & TO & 2037 & 0.00 & 1 & 0 & 0.00 & 0.00 & 4.1e7 & \textbf{0.66} & TO \\
				QPLIB\_5875 & 200 & 0.78 & 0.00 & 0.95 & 3.8e7 & \textbf{0.97} & TO & 1650 & 0.99 & 329 & 36 & 0.86 & 0.95 & 5.4e2 & 0.96 & TO \\
				\bottomrule
		\end{tabular}}
		\caption{QPLIB instances solved by neither of the two methods.}
		\label{tableUnsolvedQPLIB}
	\end{table}

\newpage	\section{Comparison of the Cutting Plane Methods}
	\label{app:relaxboundDetails}
	
	Tables~\ref{tab:relaxboundDetail-boxqp} and~\ref{tab:relaxboundDetail-qplib} report detailed per-instance results for the four cutting-plane methods ``Dense McC.+Cuts'', ``Dense Cuts'', ``Sparse Cuts'', and ``SDP+Sparse Cuts'', previously summarized in aggregated form in Table~\ref{tab:bounds}.
	
	\begingroup
	\fontsize{7pt}{8pt}\selectfont
	\setlength{\tabcolsep}{1pt} 
	
	\begin{longtable}{l|rrrr|rrrr|rrr|rrrr}
		\caption{Comparison of the cutting plane methods on BoxQCQP instances.}\label{tab:relaxboundDetail-boxqp} \\
		\toprule
		& \multicolumn{4}{c|}{Dense McC.+Cuts} & \multicolumn{4}{c|}{Dense Cuts} & \multicolumn{3}{c|}{Sparse Cuts} & \multicolumn{4}{c}{SDP+Sparse Cuts} \\
		Instance & iter & cuts & GC & $t_{lastlp}$ & iter & cuts & GC & $t_{lastlp}$ & cuts & GC & $t_{lastlp}$ & $t_{SDP}$ & cuts & GC & $t_{lastlp}$ \\
		\midrule
		\endfirsthead
		\toprule
		& \multicolumn{4}{c|}{Dense McC.+Cuts} & \multicolumn{4}{c|}{Dense Cuts} & \multicolumn{3}{c|}{Sparse Cuts} & \multicolumn{4}{c}{SDP+Sparse Cuts} \\
		Instance & iter & cuts & GC & $t_{lastlp}$ & iter & cuts & GC & $t_{lastlp}$ & cuts & GC & $t_{lastlp}$ & $t_{SDP}$ & cuts & GC & $t_{lastlp}$ \\
		\midrule
		\endhead
		\midrule
		\multicolumn{16}{r}{Continued on next page} \\
		\midrule
		\endfoot
		\bottomrule
		\endlastfoot
		020-010-1\_10qc & 30 & 120 & 0.99 & 0 & 11 & 109 & 0.00 & 0 & 16 & 0.99 & 0.0 & 0.5 & 5 & 1.00 & 0.0 \\
		020-010-1\_5qc & 25 & 102 & 0.99 & 0 & 11 & 104 & 0.00 & 0 & 17 & 0.99 & 0.0 & 0.5 & 5 & 1.00 & 0.0 \\
		020-010-2\_10qc & 24 & 74 & 0.99 & 0 & 79 & 755 & 0.99 & 0 & 9 & 0.99 & 0.0 & 0.5 & 3 & 1.00 & 0.0 \\
		020-010-2\_5qc & 30 & 88 & 0.99 & 0 & 104 & 970 & 0.99 & 0 & 11 & 0.99 & 0.0 & 0.5 & 3 & 1.00 & 0.0 \\
		020-010-3\_10qc & 35 & 73 & 0.99 & 0 & 10 & 83 & 0.00 & 0 & 1 & 1.00 & 0.0 & 0.5 & 1 & 1.00 & 0.0 \\
		030-010-1\_10qc & 98 & 1107 & 0.99 & 1 & 354 & 4787 & 0.99 & 5 & 50 & 0.99 & 0.0 & 1.0 & 12 & 0.99 & 0.0 \\
		030-010-1\_5qc & 75 & 209 & 0.99 & 0 & 10 & 129 & 0.00 & 0 & 12 & 1.00 & 0.0 & 1.1 & 2 & 1.00 & 0.0 \\
		030-010-2\_10qc & 17 & 73 & 0.99 & 0 & 59 & 847 & 0.99 & 0 & 11 & 0.99 & 0.0 & 1.0 & 5 & 1.00 & 0.0 \\
		030-010-2\_5qc & 10 & 15 & 0.99 & 0 & 10 & 143 & 0.00 & 0 & 4 & 1.00 & 0.0 & 1.1 & 2 & 1.00 & 0.0 \\
		030-010-3\_10qc & 58 & 432 & 0.99 & 1 & 250 & 3448 & 0.99 & 4 & 46 & 0.99 & 0.0 & 1.0 & 12 & 1.00 & 0.0 \\
		040-010-1\_10qc & 75 & 1262 & 0.99 & 3 & 212 & 3911 & 0.99 & 11 & 26 & 0.99 & 0.0 & 2.0 & 11 & 0.99 & 0.0 \\
		040-010-1\_5qc & 144 & 2215 & 0.99 & 5 & 392 & 7040 & 0.98 & 20 & 34 & 0.99 & 0.0 & 2.0 & 10 & 1.00 & 0.0 \\
		040-010-2\_10qc & 112 & 1037 & 0.99 & 4 & 10 & 175 & 0.00 & 0 & 15 & 0.99 & 0.0 & 2.0 & 3 & 0.99 & 0.0 \\
		040-010-2\_5qc & 30 & 81 & 0.99 & 0 & 10 & 187 & 0.00 & 0 & 6 & 0.99 & 0.0 & 2.0 & 2 & 1.00 & 0.0 \\
		040-010-3\_10qc & 78 & 793 & 0.99 & 2 & 371 & 6822 & 0.99 & 21 & 35 & 0.99 & 0.0 & 2.0 & 12 & 0.99 & 0.0 \\
		040-010-3\_5qc & 42 & 476 & 0.99 & 2 & 222 & 4177 & 0.99 & 11 & 26 & 0.99 & 0.0 & 1.9 & 13 & 1.00 & 0.0 \\
		050-010-1\_10qc & 241 & 5006 & 0.98 & 26 & 259 & 6099 & 0.82 & 27 & 279 & 0.99 & 0.1 & 3.2 & 9 & 0.99 & 0.0 \\
		050-010-1\_5qc & 171 & 3568 & 0.99 & 18 & 256 & 6042 & 0.80 & 27 & 196 & 0.99 & 0.0 & 3.3 & 14 & 0.99 & 0.0 \\
		050-010-2\_10qc & 314 & 6578 & 0.97 & 23 & 249 & 5800 & 0.80 & 29 & 91 & 0.99 & 0.0 & 3.3 & 19 & 0.98 & 0.0 \\
		050-010-2\_5qc & 264 & 5785 & 0.98 & 28 & 255 & 5968 & 0.84 & 26 & 56 & 0.99 & 0.0 & 3.3 & 43 & 0.94 & 0.0 \\
		050-010-3\_10qc & 37 & 625 & 0.99 & 6 & 232 & 5457 & 0.98 & 30 & 19 & 0.99 & 0.0 & 3.5 & 18 & 0.87 & 0.0 \\
		050-010-3\_5qc & 19 & 330 & 0.99 & 3 & 11 & 258 & 0.00 & 1 & 11 & 0.99 & 0.0 & 3.5 & 20 & 0.99 & 0.0 \\
		060-010-1\_10qc & 184 & 5033 & 0.76 & 40 & 154 & 4378 & 0.44 & 36 & 467 & 0.99 & 0.1 & 6.0 & 7 & 0.99 & 0.0 \\
		060-010-1\_5qc & 182 & 4963 & 0.77 & 39 & 151 & 4308 & 0.45 & 32 & 461 & 0.99 & 0.1 & 5.7 & 8 & 0.99 & 0.0 \\
		060-010-2\_10qc & 181 & 4571 & 0.96 & 38 & 102 & 2925 & 0.33 & 28 & 132 & 0.99 & 0.1 & 5.9 & 32 & 0.96 & 0.0 \\
		060-010-2\_5qc & 23 & 257 & 0.99 & 2 & 92 & 2666 & 0.45 & 130 & 6 & 0.99 & 0.0 & 5.6 & 4 & 1.00 & 0.0 \\
		060-010-3\_10qc & 115 & 3022 & 0.59 & 71 & 144 & 4080 & 0.42 & 31 & 1071 & 0.99 & 0.3 & 5.8 & 5 & 0.99 & 0.0 \\
		060-010-3\_5qc & 122 & 3214 & 0.65 & 64 & 136 & 3848 & 0.42 & 28 & 620 & 0.99 & 0.2 & 5.9 & 15 & 0.96 & 0.0 \\
		070-010-1\_5qc & 134 & 4292 & 0.63 & 55 & 87 & 2907 & 0.20 & 114 & 1032 & 0.99 & 0.4 & 9.7 & 24 & 0.99 & 0.1 \\
		070-010-2\_10qc & 128 & 3994 & 0.69 & 48 & 90 & 3017 & 0.29 & 126 & 1035 & 0.99 & 0.4 & 9.5 & 8 & 0.99 & 0.1 \\
		070-010-3\_10qc & 45 & 1169 & 0.99 & 30 & 12 & 390 & 0.00 & 3 & 22 & 0.99 & 0.1 & 10.3 & 16 & 0.99 & 0.1 \\
		070-010-3\_5qc & 20 & 554 & 0.99 & 7 & 11 & 368 & 0.00 & 3 & 10 & 1.00 & 0.1 & 10.4 & 8 & 1.00 & 0.0 \\
		070-025-1\_10qc & 137 & 4407 & 0.94 & 52 & 122 & 4100 & 0.79 & 45 & 1728 & 0.96 & 2.2 & 11.3 & 6 & 0.99 & 0.2 \\
		070-025-2\_5qc & 141 & 4539 & 0.90 & 50 & 122 & 4063 & 0.75 & 43 & 1751 & 0.94 & 2.1 & 11.2 & 17 & 0.99 & 0.1 \\
		070-025-3\_10qc & 133 & 4272 & 0.97 & 53 & 122 & 4116 & 0.83 & 45 & 1709 & 0.97 & 2.5 & 11.5 & 10 & 0.99 & 0.2 \\
		070-025-3\_5qc & 139 & 4445 & 0.97 & 49 & 127 & 4277 & 0.85 & 43 & 1833 & 0.98 & 2.0 & 10.8 & 12 & 0.99 & 0.2 \\
		080-010-2\_10qc & 96 & 3490 & 0.74 & 63 & 78 & 3002 & 0.21 & 129 & 172 & 0.99 & 0.2 & 17.9 & 33 & 0.95 & 0.1 \\
		080-010-2\_5qc & 83 & 2990 & 0.70 & 90 & 76 & 2930 & 0.20 & 128 & 271 & 0.99 & 0.2 & 17.4 & 46 & 0.74 & 0.1 \\
		080-010-3\_10qc & 93 & 3387 & 0.83 & 76 & 72 & 2749 & 0.25 & 147 & 110 & 0.99 & 0.2 & 16.5 & 53 & 0.91 & 0.1 \\
		080-010-3\_5qc & 78 & 2771 & 0.74 & 90 & 72 & 2783 & 0.15 & 142 & 55 & 0.99 & 0.1 & 16.1 & 35 & 0.75 & 0.1 \\
		080-025-1\_10qc & 108 & 3972 & 0.90 & 60 & 75 & 2874 & 0.60 & 148 & 1128 & 0.92 & 2.0 & 16.9 & 9 & 0.99 & 0.3 \\
		080-025-1\_5qc & 96 & 3532 & 0.89 & 79 & 74 & 2821 & 0.62 & 135 & 1136 & 0.92 & 2.2 & 17.4 & 10 & 0.99 & 0.2 \\
		080-025-2\_10qc & 110 & 4040 & 0.83 & 64 & 76 & 2896 & 0.59 & 133 & 1177 & 0.87 & 2.0 & 17.1 & 12 & 0.99 & 0.2 \\
		080-025-2\_5qc & 95 & 3479 & 0.83 & 84 & 75 & 2861 & 0.57 & 155 & 1109 & 0.88 & 2.0 & 18.0 & 9 & 0.99 & 0.2 \\
		080-025-3\_10qc & 108 & 4002 & 0.90 & 65 & 77 & 2954 & 0.64 & 156 & 1147 & 0.93 & 2.1 & 17.7 & 13 & 0.99 & 0.3 \\
		080-025-3\_5qc & 108 & 3988 & 0.90 & 66 & 75 & 2889 & 0.63 & 152 & 1175 & 0.93 & 2.1 & 18.1 & 12 & 0.99 & 0.2 \\
		090-010-1\_10qc & 52 & 2145 & 0.46 & 240 & 64 & 2750 & 0.13 & 176 & 482 & 0.96 & 0.5 & 25.7 & 20 & 0.97 & 0.1 \\
		090-010-1\_5qc & 54 & 2198 & 0.49 & 220 & 60 & 2612 & 0.13 & 151 & 466 & 0.96 & 0.4 & 25.4 & 12 & 0.99 & 0.1 \\
		090-010-2\_10qc & 79 & 3251 & 0.58 & 92 & 72 & 3100 & 0.30 & 146 & 453 & 0.95 & 0.4 & 24.2 & 14 & 0.99 & 0.1 \\
		090-010-2\_5qc & 79 & 3244 & 0.60 & 92 & 73 & 3135 & 0.31 & 154 & 437 & 0.97 & 0.4 & 26.3 & 8 & 0.99 & 0.1 \\
		090-010-3\_10qc & 80 & 3311 & 0.54 & 91 & 71 & 3026 & 0.30 & 149 & 476 & 0.95 & 0.4 & 24.1 & 4 & 1.00 & 0.1 \\
		090-010-3\_5qc & 78 & 3231 & 0.56 & 98 & 71 & 3035 & 0.29 & 145 & 461 & 0.97 & 0.4 & 25.3 & 14 & 0.97 & 0.1 \\
		090-025-1\_10qc & 80 & 3306 & 0.86 & 86 & 62 & 2673 & 0.59 & 147 & 718 & 0.86 & 2.4 & 28.4 & 30 & 0.99 & 0.1 \\
		090-025-1\_5qc & 87 & 3582 & 0.88 & 78 & 65 & 2782 & 0.60 & 157 & 824 & 0.88 & 2.0 & 26.8 & 14 & 0.99 & 0.1 \\
		090-025-2\_10qc & 87 & 3606 & 0.87 & 87 & 68 & 2941 & 0.59 & 133 & 863 & 0.89 & 2.4 & 25.3 & 8 & 0.99 & 0.0 \\
		090-025-2\_5qc & 82 & 3414 & 0.87 & 89 & 61 & 2642 & 0.58 & 144 & 741 & 0.88 & 2.3 & 27.8 & 9 & 0.99 & 0.0 \\
		090-025-3\_10qc & 89 & 3702 & 0.91 & 84 & 70 & 3044 & 0.59 & 130 & 853 & 0.91 & 2.3 & 27.0 & 10 & 0.99 & 0.0 \\
		090-025-3\_5qc & 80 & 3328 & 0.91 & 93 & 62 & 2696 & 0.59 & 138 & 750 & 0.91 & 2.3 & 28.5 & 10 & 0.99 & 0.1 \\
		100-010-1\_10qc & 51 & 2332 & 0.55 & 338 & 54 & 2579 & 0.16 & 181 & 274 & 0.96 & 0.5 & 39.4 & 20 & 0.96 & 0.1 \\
		100-010-1\_5qc & 50 & 2284 & 0.56 & 332 & 52 & 2495 & 0.10 & 207 & 269 & 0.97 & 0.5 & 41.0 & 17 & 0.97 & 0.1 \\
		100-010-2\_10qc & 60 & 2763 & 0.59 & 164 & 63 & 2987 & 0.31 & 159 & 291 & 0.94 & 0.5 & 40.0 & 6 & 0.99 & 0.2 \\
		100-010-2\_5qc & 61 & 2804 & 0.66 & 148 & 63 & 2998 & 0.35 & 178 & 294 & 0.96 & 0.5 & 40.3 & 7 & 0.99 & 0.2 \\
		100-010-3\_10qc & 59 & 2755 & 0.67 & 156 & 59 & 2842 & 0.32 & 152 & 310 & 0.97 & 0.6 & 41.1 & 11 & 0.99 & 0.2 \\
		100-010-3\_5qc & 59 & 2741 & 0.67 & 154 & 55 & 2633 & 0.25 & 194 & 297 & 0.95 & 0.5 & 38.0 & 10 & 0.99 & 0.1 \\
		100-025-1\_10qc & 64 & 2938 & 0.84 & 140 & 53 & 2529 & 0.58 & 167 & 492 & 0.83 & 2.4 & 42.9 & 17 & 0.99 & 0.1 \\
		100-025-1\_5qc & 65 & 2987 & 0.84 & 151 & 53 & 2520 & 0.58 & 164 & 503 & 0.83 & 2.2 & 41.9 & 15 & 0.99 & 0.1 \\
		100-025-2\_10qc & 66 & 3047 & 0.81 & 146 & 53 & 2540 & 0.55 & 171 & 510 & 0.81 & 2.2 & 45.6 & 18 & 0.99 & 0.1 \\
		100-025-2\_5qc & 65 & 2980 & 0.84 & 142 & 52 & 2488 & 0.57 & 163 & 512 & 0.84 & 2.4 & 41.7 & 32 & 0.99 & 0.1 \\
		100-025-3\_10qc & 64 & 2957 & 0.85 & 139 & 50 & 2409 & 0.57 & 171 & 504 & 0.84 & 2.4 & 41.7 & 13 & 0.99 & 0.1 \\
		100-025-3\_5qc & 65 & 3005 & 0.86 & 137 & 52 & 2498 & 0.59 & 172 & 509 & 0.86 & 2.2 & 45.6 & 14 & 0.99 & 0.1 \\
		125-010-1\_10qc & 46 & 2645 & 0.50 & 127 & 43 & 2553 & 0.25 & 210 & 113 & 0.75 & 0.3 & 106.0 & 7 & 0.99 & 0.0 \\
		125-010-1\_5qc & 47 & 2702 & 0.49 & 127 & 44 & 2608 & 0.26 & 195 & 104 & 0.76 & 0.2 & 107.6 & 7 & 0.99 & 0.0 \\
		125-010-2\_10qc & 45 & 2610 & 0.62 & 133 & 42 & 2519 & 0.29 & 171 & 109 & 0.87 & 0.3 & 117.2 & 5 & 0.99 & 0.0 \\
		125-010-2\_5qc & 46 & 2675 & 0.63 & 132 & 38 & 2269 & 0.26 & 207 & 100 & 0.88 & 0.2 & 108.0 & 6 & 0.99 & 0.0 \\
		125-010-3\_10qc & 43 & 2493 & 0.61 & 138 & 46 & 2739 & 0.37 & 194 & 116 & 0.87 & 0.3 & 104.0 & 5 & 0.99 & 0.0 \\
		125-010-3\_5qc & 42 & 2440 & 0.63 & 150 & 44 & 2606 & 0.38 & 222 & 111 & 0.86 & 0.3 & 117.7 & 6 & 0.99 & 0.0 \\
		125-025-1\_10qc & 47 & 2708 & 0.77 & 120 & 34 & 2039 & 0.53 & 229 & 178 & 0.72 & 1.3 & 115.5 & 20 & 0.99 & 0.2 \\
		125-025-1\_5qc & 48 & 2774 & 0.77 & 126 & 35 & 2094 & 0.54 & 240 & 186 & 0.72 & 1.2 & 108.5 & 19 & 0.99 & 0.1 \\
		125-025-2\_10qc & 48 & 2755 & 0.84 & 135 & 33 & 1977 & 0.57 & 222 & 176 & 0.79 & 1.2 & 118.1 & 21 & 0.99 & 0.1 \\
		125-025-2\_5qc & 47 & 2705 & 0.84 & 124 & 33 & 1970 & 0.58 & 218 & 177 & 0.79 & 1.2 & 116.5 & 22 & 0.99 & 0.1 \\
		125-025-3\_10qc & 47 & 2705 & 0.85 & 131 & 35 & 2093 & 0.58 & 212 & 173 & 0.80 & 1.1 & 121.1 & 33 & 0.99 & 0.2 \\
		125-025-3\_5qc & 46 & 2649 & 0.86 & 141 & 34 & 2038 & 0.58 & 230 & 169 & 0.81 & 1.1 & 127.1 & 27 & 0.99 & 0.2 \\
		150-010-1\_10qc & 29 & 2024 & 0.48 & 235 & 33 & 2360 & 0.30 & 311 & 41 & 0.53 & 0.2 & 245.5 & 3 & 1.00 & 0.1 \\
		150-010-1\_5qc & 28 & 1941 & 0.48 & 232 & 33 & 2356 & 0.30 & 266 & 40 & 0.53 & 0.2 & 276.5 & 3 & 0.99 & 0.0 \\
		150-010-2\_10qc & 29 & 2019 & 0.51 & 236 & 33 & 2354 & 0.26 & 264 & 40 & 0.71 & 0.2 & 256.9 & 4 & 1.00 & 0.1 \\
		150-010-2\_5qc & 30 & 2083 & 0.52 & 223 & 33 & 2366 & 0.26 & 253 & 43 & 0.68 & 0.2 & 256.4 & 3 & 0.99 & 0.0 \\
		150-010-3\_10qc & 29 & 2023 & 0.53 & 228 & 32 & 2310 & 0.26 & 274 & 40 & 0.52 & 0.2 & 258.1 & 5 & 0.99 & 0.1 \\
		150-010-3\_5qc & 29 & 2032 & 0.56 & 220 & 31 & 2236 & 0.28 & 261 & 38 & 0.60 & 0.1 & 247.3 & 4 & 0.99 & 0.0 \\
		150-025-1\_10qc & 29 & 2016 & 0.84 & 224 & 22 & 1603 & 0.56 & 285 & 64 & 0.76 & 0.6 & 282.0 & 20 & 0.99 & 0.3 \\
		150-025-1\_5qc & 29 & 2018 & 0.85 & 220 & 21 & 1531 & 0.56 & 282 & 66 & 0.77 & 0.6 & 299.1 & 22 & 0.99 & 0.3 \\
		150-025-2\_10qc & 28 & 1959 & 0.84 & 231 & 23 & 1682 & 0.55 & 303 & 62 & 0.78 & 0.6 & 279.9 & 15 & 0.99 & 0.2 \\
		150-025-2\_5qc & 29 & 2025 & 0.85 & 250 & 23 & 1673 & 0.56 & 336 & 63 & 0.77 & 0.5 & 281.7 & 14 & 0.99 & 0.2 \\
		150-025-3\_10qc & 28 & 1950 & 0.84 & 231 & 20 & 1446 & 0.55 & 731 & 62 & 0.78 & 0.6 & 297.3 & 18 & 0.99 & 0.2 \\
		150-025-3\_5qc & 28 & 1951 & 0.84 & 242 & 21 & 1519 & 0.56 & 443 & 61 & 0.80 & 0.5 & 305.7 & 19 & 0.99 & 0.2 \\
		175-010-1\_10qc & 21 & 1716 & 0.48 & 385 & 26 & 2173 & 0.30 & 339 & 17 & 0.48 & 0.1 & 591.2 & 4 & 0.99 & 0.1 \\
		175-010-1\_5qc & 20 & 1637 & 0.48 & 419 & 25 & 2092 & 0.31 & 392 & 15 & 0.55 & 0.1 & 578.2 & 4 & 0.99 & 0.1 \\
		175-010-2\_10qc & 21 & 1721 & 0.40 & 354 & 28 & 2348 & 0.25 & 345 & 19 & 0.44 & 0.1 & 544.6 & 4 & 0.99 & 0.1 \\
		175-010-2\_5qc & 19 & 1562 & 0.40 & 443 & 25 & 2099 & 0.23 & 292 & 15 & 0.10 & 0.1 & 627.9 & 4 & 0.99 & 0.1 \\
		175-010-3\_10qc & 20 & 1645 & 0.39 & 431 & 26 & 2171 & 0.25 & 358 & 17 & 0.42 & 0.1 & 593.6 & 5 & 0.99 & 0.1 \\
		175-010-3\_5qc & 20 & 1636 & 0.40 & 413 & 25 & 2097 & 0.24 & 363 & 16 & 0.38 & 0.1 & 544.1 & 7 & 0.99 & 0.1 \\
		175-025-1\_10qc & 20 & 1635 & 0.80 & 446 & 15 & 1276 & 0.50 & 635 & 27 & 0.74 & 0.4 & 620.4 & 18 & 0.98 & 0.3 \\
		175-025-1\_5qc & 20 & 1627 & 0.81 & 424 & 16 & 1364 & 0.52 & 623 & 26 & 0.74 & 0.5 & 667.5 & 17 & 0.98 & 0.3 \\
		175-025-2\_10qc & 20 & 1639 & 0.83 & 410 & 17 & 1446 & 0.56 & 527 & 25 & 0.76 & 0.4 & 686.1 & 17 & 0.98 & 0.3 \\
		175-025-2\_5qc & 20 & 1636 & 0.83 & 416 & 15 & 1276 & 0.54 & 950 & 26 & 0.77 & 0.5 & 646.8 & 17 & 0.99 & 0.2 \\
		175-025-3\_10qc & 20 & 1633 & 0.80 & 399 & 16 & 1358 & 0.52 & 499 & 27 & 0.74 & 0.5 & 650.2 & 17 & 0.98 & 0.3 \\
		175-025-3\_5qc & 19 & 1550 & 0.80 & 472 & 16 & 1357 & 0.52 & 605 & 27 & 0.74 & 0.7 & 696.3 & 17 & 0.98 & 0.3 \\
		200-010-1\_10qc & 16 & 1510 & 0.40 & 752 & 19 & 1830 & 0.23 & 336 & 6 & 0.35 & 0.1 & 1300.8 & 3 & 0.99 & 0.1 \\
		200-010-1\_5qc & 16 & 1514 & 0.40 & 801 & 20 & 1930 & 0.25 & 343 & 7 & 0.35 & 0.1 & 1331.4 & 3 & 0.99 & 0.1 \\
		200-010-2\_10qc & 16 & 1517 & 0.41 & 730 & 18 & 1739 & 0.23 & 329 & 6 & 0.38 & 0.1 & 1229.2 & 3 & 0.97 & 0.1 \\
		200-010-2\_5qc & 15 & 1418 & 0.39 & 380 & 19 & 1834 & 0.25 & 354 & 6 & 0.37 & 0.2 & 1260.9 & 3 & 0.97 & 0.1 \\
		200-010-3\_10qc & 16 & 1509 & 0.39 & 841 & 20 & 1919 & 0.22 & 295 & 6 & 0.22 & 0.1 & 1241.9 & 3 & 0.99 & 0.1 \\
		200-010-3\_5qc & 16 & 1504 & 0.40 & 926 & 18 & 1731 & 0.19 & 343 & 6 & 0.07 & 0.1 & 1284.4 & 3 & 0.98 & 0.1 \\
		200-025-1\_10qc & 17 & 1606 & 0.80 & 740 & 13 & 1272 & 0.49 & 799 & 12 & 0.67 & 0.4 & 1306.9 & 7 & 0.93 & 0.2 \\
		200-025-1\_5qc & 17 & 1604 & 0.80 & 733 & 13 & 1271 & 0.50 & 1393 & 12 & 0.68 & 0.3 & 1362.6 & 6 & 0.93 & 0.2 \\
		200-025-2\_10qc & 17 & 1603 & 0.80 & 953 & 12 & 1172 & 0.48 & 413 & 12 & 0.68 & 0.4 & 1265.4 & 7 & 0.93 & 0.3 \\
		200-025-2\_5qc & 17 & 1607 & 0.80 & 743 & 12 & 1171 & 0.49 & 442 & 12 & 0.68 & 0.3 & 1325.2 & 6 & 0.92 & 0.2 \\
		200-025-3\_10qc & 17 & 1594 & 0.78 & 808 & 13 & 1270 & 0.48 & 1355 & 11 & 0.67 & 0.4 & 1367.6 & 5 & 0.91 & 0.3 \\
		200-025-3\_5qc & 17 & 1593 & 0.78 & 792 & 14 & 1363 & 0.50 & 766 & 12 & 0.68 & 0.3 & 1352.6 & 6 & 0.93 & 0.2\end{longtable}
	\endgroup
	
	\begingroup
	\fontsize{7pt}{8pt}\selectfont
	\setlength{\tabcolsep}{1pt} 

	
	\begin{longtable}{l|rrrr|rrrr|rrr|rrrr}
		\caption{Comparison of the cutting plane methods on QPLIB instances.}\label{tab:relaxboundDetail-qplib} \\
		\toprule
		& \multicolumn{4}{c|}{Dense McC.+Cuts} & \multicolumn{4}{c|}{Dense Cuts} & \multicolumn{3}{c|}{Sparse Cuts} & \multicolumn{4}{c}{SDP+Sparse Cuts} \\
		Instance & iter & cuts & GC & $t_{lastlp}$ & iter & cuts & GC & $t_{lastlp}$ & cuts & GC & $t_{lastlp}$ & $t_{SDP}$ & cuts & GC & $t_{lastlp}$ \\
		\midrule
		\endfirsthead
		\toprule
		& \multicolumn{4}{c|}{Dense McC.+Cuts} & \multicolumn{4}{c|}{Dense Cuts} & \multicolumn{3}{c|}{Sparse Cuts} & \multicolumn{4}{c}{SDP+Sparse Cuts} \\
		Instance & iter & cuts & GC & $t_{lastlp}$ & iter & cuts & GC & $t_{lastlp}$ & cuts & GC & $t_{lastlp}$ & $t_{SDP}$ & cuts & GC & $t_{lastlp}$ \\
		\midrule
		\endhead
		\midrule
		\multicolumn{16}{r}{Continued on next page} \\
		\midrule
		\endfoot
		\bottomrule
		\endlastfoot
		QPLIB\_0343 & 35 & 791 & 0.99 & 26 & 35 & 800 & 0.99 & 23 & 610 & 0.98 & 18.1 & 8.0 & 451 & 0.47 & 7.3 \\
		QPLIB\_0633 & 53 & 1394 & 0.69 & 156 & 53 & 1379 & 0.69 & 190 & 226 & 0.59 & 22.6 & 26.0 & 246 & 0.81 & 16.7 \\
		QPLIB\_0911 & 217 & 4854 & 0.99 & 17 & 215 & 4826 & 0.99 & 18 & 1449 & 0.92 & 4.8 & 8.6 & 1382 & 0.99 & 4.9 \\
		QPLIB\_0975 & 221 & 4909 & 0.99 & 12 & 215 & 4803 & 0.99 & 13 & 2000 & 0.94 & 3.6 & 6.7 & 1572 & 0.99 & 2.5 \\
		QPLIB\_10040 & 41 & 2437 & 0.82 & 189 & 44 & 2638 & 0.82 & 174 & 442 & 0.70 & 17.7 & 240.8 & 428 & 0.72 & 15.5 \\
		QPLIB\_10041 & 42 & 2421 & 0.74 & 137 & 43 & 2471 & 0.73 & 141 & 445 & 0.72 & 19.1 & 180.5 & 302 & 0.99 & 10.7 \\
		QPLIB\_10042 & 36 & 2013 & 0.94 & 256 & 41 & 2327 & 0.94 & 153 & 415 & 0.91 & 19.4 & 207.3 & 430 & 0.89 & 17.1 \\
		QPLIB\_10043 & 26 & 1893 & 0.80 & 254 & 27 & 1980 & 0.79 & 237 & 351 & 0.62 & 24.0 & 593.0 & 325 & 0.61 & 20.8 \\
		QPLIB\_10044 & 26 & 1639 & 0.79 & 290 & 24 & 1645 & 0.77 & 356 & 346 & 0.74 & 22.1 & 528.6 & 322 & 0.72 & 18.8 \\
		QPLIB\_10045 & 26 & 1844 & 0.64 & 281 & 29 & 2065 & 0.62 & 250 & 368 & 0.62 & 26.9 & 512.0 & 367 & 0.93 & 18.8 \\
		QPLIB\_10046 & 25 & 1617 & 0.86 & 354 & 25 & 1817 & 0.84 & 294 & 356 & 0.77 & 20.5 & 530.3 & 331 & 0.76 & 17.7 \\
		QPLIB\_10047 & 27 & 1905 & 0.79 & 243 & 27 & 1907 & 0.79 & 245 & 387 & 0.75 & 22.8 & 532.5 & 374 & 0.85 & 20.2 \\
		QPLIB\_10048 & 26 & 1784 & 0.85 & 280 & 27 & 1881 & 0.84 & 272 & 366 & 0.79 & 26.9 & 533.2 & 344 & 0.77 & 21.3 \\
		QPLIB\_10049 & 26 & 1828 & 0.79 & 258 & 27 & 1900 & 0.79 & 245 & 380 & 0.75 & 22.7 & 553.5 & 366 & 0.86 & 18.8 \\
		QPLIB\_10050 & 28 & 1846 & 0.89 & 231 & 29 & 1956 & 0.88 & 243 & 382 & 0.87 & 22.0 & 456.1 & 365 & 0.86 & 23.8 \\
		QPLIB\_10051 & 27 & 1879 & 0.78 & 248 & 28 & 1949 & 0.77 & 242 & 385 & 0.76 & 21.9 & 492.8 & 364 & 0.97 & 20.6 \\
		QPLIB\_10052 & 26 & 1718 & 0.85 & 263 & 25 & 1673 & 0.85 & 283 & 380 & 0.83 & 19.4 & 524.3 & 356 & 0.81 & 16.7 \\
		QPLIB\_10053 & 26 & 1855 & 0.90 & 243 & 27 & 1939 & 0.90 & 248 & 374 & 0.78 & 21.7 & 604.9 & 346 & 0.77 & 18.7 \\
		QPLIB\_10054 & 20 & 1484 & 0.77 & 609 & 18 & 1530 & 0.71 & 607 & 290 & 0.68 & 25.5 & 1157.9 & 237 & 0.63 & 18.0 \\
		QPLIB\_10055 & 17 & 1414 & 0.78 & 480 & 19 & 1624 & 0.78 & 423 & 304 & 0.49 & 22.7 & 1242.9 & 248 & 0.41 & 18.4 \\
		QPLIB\_10056 & 20 & 1573 & 0.81 & 423 & 19 & 1558 & 0.79 & 435 & 325 & 0.78 & 23.2 & 1023.0 & 279 & 0.76 & 20.5 \\
		QPLIB\_10057 & 20 & 1533 & 0.66 & 452 & 15 & 1479 & 0.51 & 440 & 161 & 0.29 & 14.1 & 2675.1 & 39 & 0.06 & 3.2 \\
		QPLIB\_10058 & 16 & 1503 & 0.71 & 867 & 15 & 1501 & 0.65 & 813 & 248 & 0.49 & 25.5 & 2989.2 & 83 & 0.18 & 6.4 \\
		QPLIB\_10059 & 15 & 1458 & 0.50 & 509 & 17 & 1687 & 0.39 & 570 & 151 & 0.61 & 8.0 & 1469.1 & 87 & 0.96 & 4.5 \\
		QPLIB\_10060 & 15 & 1484 & 0.60 & 697 & 14 & 1427 & 0.55 & 467 & 249 & 0.38 & 22.9 & 3038.0 & 78 & 0.12 & 6.6 \\
		QPLIB\_10061 & 16 & 1456 & 0.65 & 489 & 16 & 1561 & 0.58 & 744 & 266 & 0.63 & 28.4 & 2382.4 & 142 & 0.50 & 10.8 \\
		QPLIB\_10062 & 16 & 1544 & 0.51 & 665 & 15 & 1468 & 0.47 & 376 & 266 & 0.53 & 29.5 & 2111.3 & 171 & 0.85 & 13.2 \\
		QPLIB\_10063 & 17 & 1311 & 0.97 & 452 & 16 & 1284 & 0.96 & 511 & 213 & 0.95 & 41.2 & 2165.1 & 141 & 0.96 & 21.5 \\
		QPLIB\_10064 & 18 & 1468 & 0.73 & 463 & 17 & 1459 & 0.72 & 375 & 262 & 0.72 & 27.0 & 1947.5 & 169 & 0.94 & 16.5 \\
		QPLIB\_10065 & 16 & 1486 & 0.82 & 650 & 16 & 1506 & 0.81 & 688 & 274 & 0.78 & 26.1 & 2424.3 & 162 & 0.67 & 13.7 \\
		QPLIB\_10066 & 17 & 1549 & 0.86 & 729 & 16 & 1485 & 0.84 & 769 & 277 & 0.84 & 30.7 & 2428.3 & 165 & 0.76 & 12.6 \\
		QPLIB\_10067 & 16 & 1476 & 0.76 & 648 & 16 & 1524 & 0.74 & 674 & 278 & 0.73 & 26.0 & 2216.6 & 177 & 0.65 & 15.1 \\
		QPLIB\_10068 & 16 & 1498 & 0.81 & 728 & 16 & 1515 & 0.81 & 759 & 282 & 0.78 & 27.9 & 2396.9 & 166 & 0.68 & 13.4 \\
		QPLIB\_10069 & 14 & 1389 & 0.86 & 471 & 14 & 1399 & 0.85 & 477 & 278 & 0.46 & 28.4 & 3082.5 & 102 & 0.21 & 8.0 \\
		QPLIB\_10070 & 16 & 1508 & 0.74 & 642 & 16 & 1524 & 0.73 & 763 & 282 & 0.70 & 29.5 & 2343.1 & 170 & 0.60 & 14.3 \\
		QPLIB\_10071 & 17 & 1513 & 0.77 & 629 & 16 & 1494 & 0.77 & 591 & 286 & 0.68 & 26.9 & 2311.1 & 172 & 0.62 & 16.1 \\
		QPLIB\_10072 & 122 & 4036 & 0.79 & 58 & 118 & 4049 & 0.76 & 65 & 848 & 0.73 & 7.7 & 23.3 & 349 & 0.99 & 3.7 \\
		QPLIB\_10073 & 78 & 1812 & 0.82 & 185 & 91 & 2969 & 0.80 & 74 & 822 & 0.79 & 8.3 & 21.6 & 172 & 0.99 & 1.3 \\
		QPLIB\_10074 & 120 & 4130 & 0.93 & 61 & 119 & 4064 & 0.93 & 58 & 769 & 0.86 & 9.5 & 27.0 & 774 & 0.98 & 8.9 \\
		QPLIB\_1055 & 89 & 1556 & 0.99 & 2 & 88 & 1535 & 0.99 & 3 & 2127 & 0.99 & 3.5 & 4.8 & 557 & 0.99 & 0.8 \\
		QPLIB\_1143 & 129 & 2195 & 0.99 & 5 & 132 & 2276 & 0.99 & 4 & 766 & 0.94 & 10.6 & 4.8 & 687 & 0.98 & 9.5 \\
		QPLIB\_1157 & 27 & 364 & 0.99 & 4 & 27 & 385 & 0.99 & 5 & 255 & 0.99 & 2.8 & 4.5 & 95 & 0.99 & 1.2 \\
		QPLIB\_1353 & 40 & 836 & 0.99 & 30 & 42 & 888 & 0.99 & 29 & 577 & 0.98 & 17.4 & 8.1 & 292 & 0.96 & 5.0 \\
		QPLIB\_1423 & 68 & 1056 & 0.99 & 2 & 68 & 1048 & 0.99 & 2 & 740 & 0.98 & 11.2 & 4.8 & 227 & 0.99 & 2.2 \\
		QPLIB\_1437 & 181 & 3716 & 0.99 & 17 & 185 & 3822 & 0.99 & 21 & 595 & 0.90 & 16.4 & 7.8 & 598 & 0.99 & 17.0 \\
		QPLIB\_1451 & 71 & 1901 & 0.93 & 123 & 69 & 1853 & 0.93 & 121 & 463 & 0.80 & 12.6 & 13.0 & 449 & 0.85 & 11.5 \\
		QPLIB\_1493 & 98 & 1560 & 0.99 & 3 & 100 & 1580 & 0.99 & 3 & 798 & 0.96 & 9.9 & 4.3 & 302 & 0.99 & 2.3 \\
		QPLIB\_1507 & 58 & 800 & 0.99 & 1 & 58 & 791 & 0.99 & 0 & 983 & 0.99 & 0.6 & 2.5 & 315 & 0.99 & 1.2 \\
		QPLIB\_1535 & 64 & 1646 & 0.92 & 133 & 64 & 1644 & 0.92 & 118 & 370 & 0.81 & 14.4 & 15.4 & 376 & 0.85 & 15.4 \\
		QPLIB\_1619 & 101 & 1956 & 0.99 & 9 & 95 & 1857 & 0.99 & 8 & 556 & 0.94 & 16.1 & 8.9 & 568 & 0.98 & 17.1 \\
		QPLIB\_1661 & 71 & 1756 & 0.98 & 120 & 69 & 1715 & 0.98 & 103 & 453 & 0.89 & 12.6 & 12.5 & 456 & 0.98 & 11.0 \\
		QPLIB\_1675 & 69 & 1852 & 0.89 & 126 & 81 & 2229 & 0.87 & 108 & 602 & 0.73 & 18.4 & 10.9 & 602 & 0.99 & 17.3 \\
		QPLIB\_1703 & 65 & 1716 & 0.96 & 126 & 66 & 1759 & 0.96 & 128 & 402 & 0.89 & 12.6 & 14.8 & 416 & 0.94 & 13.1 \\
		QPLIB\_1745 & 165 & 3314 & 0.99 & 16 & 165 & 3315 & 0.99 & 15 & 542 & 0.88 & 16.4 & 8.9 & 542 & 0.96 & 17.0 \\
		QPLIB\_1773 & 68 & 1774 & 0.91 & 142 & 69 & 1787 & 0.91 & 125 & 462 & 0.82 & 11.6 & 12.9 & 466 & 0.85 & 11.5 \\
		QPLIB\_1886 & 166 & 3717 & 0.99 & 14 & 259 & 5913 & 0.99 & 24 & 2181 & 0.96 & 3.3 & 5.8 & 52 & 0.99 & 0.3 \\
		QPLIB\_1913 & 217 & 4674 & 0.99 & 17 & 304 & 6676 & 0.99 & 23 & 2466 & 0.90 & 3.0 & 5.1 & 93 & 0.99 & 0.2 \\
		QPLIB\_1922 & 53 & 694 & 0.99 & 0 & 101 & 1375 & 0.99 & 1 & 1051 & 0.99 & 0.3 & 1.8 & 18 & 0.99 & 0.0 \\
		QPLIB\_1931 & 91 & 1596 & 0.99 & 3 & 155 & 2810 & 0.99 & 5 & 2577 & 0.99 & 1.7 & 3.4 & 40 & 0.99 & 0.1 \\
		QPLIB\_1940 & 295 & 6307 & 0.99 & 18 & 321 & 7013 & 0.98 & 24 & 2428 & 0.86 & 3.3 & 5.2 & 178 & 0.99 & 0.3 \\
		QPLIB\_1967 & 97 & 2136 & 0.99 & 9 & 97 & 2137 & 0.99 & 8 & 1154 & 0.97 & 5.6 & 9.6 & 1089 & 0.99 & 5.1 \\
		QPLIB\_3523 & 20 & 1744 & 0.36 & 401 & 23 & 2001 & 0.14 & 328 & 12 & 0.35 & 0.2 & 846.8 & 9 & 0.50 & 0.1 \\
		QPLIB\_3714 & 19 & 885 & 0.17 & 270 & 40 & 1962 & 0.13 & 250 & 434 & 0.54 & 4.9 & 85.2 & 316 & 0.88 & 6.2 \\
		QPLIB\_3762 & 83 & 3580 & 0.72 & 80 & 61 & 2637 & 0.55 & 148 & 780 & 0.71 & 2.6 & 27.7 & 800 & 0.93 & 2.5 \\
		QPLIB\_3775 & 5 & 159 & 0.11 & 1898 & 18 & 1289 & 0.11 & 915 & 49 & 0.30 & 14.7 & 889.7 & 29 & 0.84 & 13.1 \\
		QPLIB\_3815 & 4 & 309 & 0.00 & 1289 & 10 & 875 & 0.00 & 25 & 7 & 0.51 & 0.1 & 879.5 & 2 & 0.99 & 0.1 \\
		QPLIB\_3834 & 273 & 4922 & 0.89 & 17 & 275 & 4944 & 0.88 & 17 & 635 & 0.75 & 14.8 & 8.0 & 640 & 0.97 & 14.2 \\
		QPLIB\_3870 & 20 & 1751 & 0.48 & 429 & 17 & 1495 & 0.26 & 494 & 19 & 0.39 & 0.3 & 817.2 & 13 & 0.48 & 0.2 \\
		QPLIB\_3883 & 20 & 1757 & 0.44 & 410 & 20 & 1768 & 0.23 & 395 & 15 & 0.34 & 0.2 & 831.0 & 11 & 0.51 & 0.1 \\
		QPLIB\_5875 & 17 & 1578 & 0.92 & 758 & 18 & 1689 & 0.88 & 513 & 223 & 0.85 & 15.3 & 1574.0 & 155 & 0.89 & 10.7 \\
		QPLIB\_5881 & 52 & 2890 & 0.86 & 126 & 39 & 2232 & 0.64 & 191 & 254 & 0.77 & 2.2 & 94.3 & 37 & 0.99 & 0.2 \\
		QPLIB\_5882 & 29 & 2001 & 0.90 & 260 & 35 & 2426 & 0.87 & 181 & 362 & 0.83 & 19.6 & 350.9 & 357 & 0.88 & 14.1 \\
		QPLIB\_5935 & 5 & 243 & 1.00 & 3 & 5 & 243 & 1.00 & 4 & 157 & 0.99 & 2.0 & 63.2 & 175 & 0.99 & 2.1 \\
		QPLIB\_5944 & 5 & 246 & 0.99 & 4 & 5 & 246 & 0.99 & 4 & 197 & 0.99 & 3.3 & 64.8 & 260 & 0.99 & 4.2 \\
		QPLIB\_5962 & 5 & 367 & 1.00 & 14 & 5 & 367 & 1.00 & 14 & 234 & 0.99 & 8.7 & 307.7 & 245 & 0.99 & 8.4 \\
		QPLIB\_5971 & 5 & 368 & 1.00 & 15 & 5 & 368 & 1.00 & 15 & 256 & 0.99 & 9.1 & 321.8 & 246 & 0.99 & 8.2 \\
		QPLIB\_5980 & 5 & 368 & 1.00 & 14 & 5 & 368 & 1.00 & 14 & 229 & 0.99 & 8.1 & 309.6 & 229 & 0.99 & 7.9\end{longtable}
	\endgroup

	\end{document}